\newtheorem{theorem}{Theorem}[section]
\newtheorem{lemma}[theorem]{Lemma}
\newtheorem{corollary}[theorem]{Corollary}
\newtheorem{proposition}[theorem]{Proposition}
\theoremstyle{definition}
\newtheorem{definition}[theorem]{Definition}
\newtheorem{remark}[theorem]{Remark}
\newcommand\remove[1]{}
\newcommand{\rnote}[1]{}
\newcommand{\jnote}[1]{}
\newcommand{\e}{\varepsilon}
\begin{document}
\title[Strictly singular multiplication operators]{\hbox{Strictly singular multiplication operators on $\mathcal L(X)$}}

\author{Martin Mathieu}
\address{Mathematical Sciences Research Centre, Queen's University Belfast, University Road, Belfast BT7 1NN, Northern Ireland}
\email{m.m@qub.ac.uk}

\author{Pedro Tradacete}
\address{Instituto de Ciencias Matem\'aticas (CSIC-UAM-UC3M-UCM),
Consejo Superior de Investigaciones Cient\'ificas,
C/ Nicol\'as Cabrera, 13--15, Campus de Cantoblanco UAM, E-28049 Madrid, Spain.}
\email{pedro.tradacete@icmat.es}

\thanks{The second-named author acknowledges financial support from the Spanish Ministry of Economy and Competitiveness through grants
MTM2016-75196-P, MTM2016-76808-P, and the ``Severo Ochoa Programme for Centres of Excellence in R\&D'' (SEV-2015-0554).
Support of an LMS Research in Pairs grant is also gratefully acknowledged.
We thank Prof.\ W.\ B.\ Johnson for helpful discussions concerning this paper. We are grateful to the referee for carefully reading the paper and pointing out an oversight in a first version of the proof of Theorem 5.5}

\subjclass[2010]{47B47,46B28.}

\keywords{Multiplication operator, strictly singular operator, $L_p$ space}

\begin{abstract}
Exploiting several $\ell_p$-factorization results for strictly singular operators,
we study the strict singularity of the multiplication operator $L_A R_B\colon T\mapsto ATB$ on
$\mathcal L(X)$ for various Banach spaces~$X$.
\end{abstract}

\maketitle

\section{Introduction}

\noindent
Let $X$ be a Banach space and let $\mathcal L(X)$ denote the Banach algebra of bounded linear operators from $X$ to itself.
Given $A,B\in\mathcal L(X)$ let us consider the multiplication operator $L_A R_B\colon\mathcal L(X)\rightarrow\mathcal L(X)$ given by
$L_AR_B(T)=ATB$. Properties of $L_AR_B$, and more general elementary operators, that is, finite sums of multiplication operators,
have been studied for many years from a variety of viewpoints. We mention spectral theory, Fredholm theory, compactness properties,
norms, positivity and numerous others, depending on the nature of the Banach space~$X$.
Various survey articles are contained in the proceedings volumes~\cite{CuMM} and~\cite{MM92},
and the paper~\cite{ST:survey} is especially pertinent here.

The aim of this note is to explore the strict singularity of the operator $L_AR_B$. Recall that an operator is strictly singular
if it is not an isomorphism when restricted to any infinite-dimensional subspace of its domain. In other words,
$T\in\mathcal L(X,Y)$ is strictly singular if, for every infinite-dimensional subspace $X_0\subseteq X$ and every $\varepsilon>0$,
there is $x\in X_0$ such that $\|Tx\|_Y\leq\varepsilon\|x\|_X$.
The class of strictly singular operators forms a closed two-sided operator ideal which contains the compact operators.
These operators were introduced by T. Kato~\cite{Kato} in connection with the perturbation theory of Fredholm operators;
in particular, it is well known that the spectrum of a strictly singular operator has the same structure as that of a compact operator.

In most cases, the class of strictly singular operators is strictly larger than that of compact operators. The formal inclusion
$i\colon\ell_p\hookrightarrow\ell_q$  for $1<p<q<\infty$ provides a simple example of a strictly singular operator which is not compact.
Nevertheless, Pitt's theorem (\cite[Theorem~2.1.4]{AK}) asserts that for $p<q$, every operator $T\colon\ell_q\rightarrow\ell_p$ is compact.
As a consequence, one can also deduce that on $\mathcal L(\ell_p)$ the classes of compact and strictly singular operators
coincide (cf.~\cite[p.~76]{LT}).

Our approach in this paper mainly focuses on analyzing strict singularity via factorization through certain spaces.
In particular, we will show that for every $p<q$ and $A,B\in\mathcal L(\ell_p,\ell_q)$, the operator
$L_A R_B\colon\mathcal L(\ell_q,\ell_p)\rightarrow \mathcal L(\ell_p,\ell_q)$ is strictly singular (Theorem~\ref{t:factor_pq}).
Motivated by this fact, we introduce the notion of \emph{approximately $(\ell_p,\ell_q)$-factorizable operator}, which yields a formally stronger condition than that of strictly singular operator. We will show that if $A,B$ are approximately $(\ell_p,\ell_q)$-factorizable operators, then the corresponding multiplication operator $L_AR_B$ is strictly singular (Theorem~\ref{t:factorizable}).

It is easy to check that if $L_AR_B$ is strictly singular on $\mathcal L(X)$, then so are the operators $A$ and $B^*$ (see Section~\ref{s:preliminaries}). The converse implication has been analyzed by M. Lindstr\"om, E. Saksman, and H.O. Tylli in~\cite{LST}, where it was shown to hold for
$X$ being any of the following classical Banach spaces: $L_p[0,1]$ ($1<p<\infty$); $\ell_p\oplus \ell_q$ ($1<p\leq q<\infty$); $C(K)$
(for compact Hausdorff~$K$); and $\mathcal L^1$.

Recall that strictly singular operators on $C(K)$ spaces, and more generally, on $C^*$-algebras, are weakly compact
(see~\cite{OS,Pelczynski}). We will show that, if $X$ is a reflexive Banach space with an unconditional basis, then
every multiplication operator $L_AR_B$ on $\mathcal L(X)$ which is strictly singular is necessarily
weakly compact (Corollary~\ref{c:SS-WK}).
This is also related to the fact that the composition of two strictly singular operators on certain spaces yields a compact operator.

Finally, in the last part of the paper, we provide a factorization property of strictly singular operators on $L_p$, based on the classical interpolation construction due to W. J. Davis, T. Figiel, W. B. Johnson and A. Pe{\l}czy\'{n}ski in~\cite{DFJP} and another factorization result of W. B. Johnson
given in~\cite{J}. Namely, every such operator factors through a certain Banach lattice, sufficiently separated from $L_p$, and through $\ell_p$
(see Theorem~\ref{t:factorizationSS} for the precise statement).
As a consequence of this fact, we show that when $A,B$ are strictly singular on $L_p$, the multiplication operator $L_AR_B$ factors through
the space of compact operators on~$\ell_p$ (Theorem~\ref{t:factorKellp}).

\section{Preliminaries}\label{s:preliminaries}

\noindent
Recall that an operator is compact when it maps the unit ball into a relatively compact set. Since the unit ball of an infinite-dimensional Banach space is never compact, it follows that compact operators are in particular strictly singular. Whereas the former is a purely topological notion, the latter is really about infinite-dimensional structure. Moreover, it is well known that an operator $T\colon X\rightarrow Y$ is strictly singular
if and only if, for each infinite-dimensional subspace $X_0\subseteq X$, there is a further infinite-dimensional $X_1\subseteq X_0$ such that the restriction $T|_{X_1}$ is compact (cf. \cite[Proposition 2.c.4]{LT}).

Occasionally, we will need certain generalizations of strictly singular operators. Given some Banach space $X$, an operator
$T\colon Y\rightarrow Z$ is called \emph{$X$-singular\/} provided it is not an isomorphism when restricted to any subspace of
$Y$ linearly isomorphic to~$X$. Particularly useful classes are that of $\ell_p$-singular or $c_0$-singular operators (see~\cite{JS}).
For instance, for an operator $T\colon L_p\rightarrow L_p$ being $\ell_2$-singular and $\ell_p$-singular is enough to get strict singularity~\cite{W}.

Given Banach spaces $X,Y_1,Y_2$ and an operator $A\colon Y_1\rightarrow Y_2$ let us consider the left multiplication operator
$$
\begin{array}{cccc}
L_{A;X}:  & \mathcal L(X,Y_1)  &\longrightarrow &\mathcal L(X,Y_2)   \\
          & T  & \longmapsto & AT
\end{array}
$$
as well as the right multiplication operator
$$
\begin{array}{cccc}
R_{A;X}:  & \mathcal L(Y_2,X)  &\longrightarrow &\mathcal L(Y_1,X)   \\
          & T  & \longmapsto & TA
\end{array}
$$
When there is no ambiguity about the space $X$, we will simply write $L_A$ and $R_A$ instead of $L_{A;X}$ and $R_{A;X}$.

Let $X_1, X_2, X_3, X_4$ be Banach spaces, $A\in\mathcal L(X_1,X_2)$ and $B\in\mathcal L(X_3,X_4)$.
We consider the multiplication operator $L_AR_B\colon\mathcal L(X_4,X_1)\rightarrow \mathcal L(X_3,X_2)$ given by $L_AR_B(T)=ATB$.

Note that, if we choose $x_1\in X_1,x_2^*\in X_2^*,x_3\in X_3,x_4^*\in X_4^*$ such that $x_2^*(Ax_1)=1=x_4^*(Bx_3)$,
then after considering the operators
$$
\begin{array}{cccccccccc}
J_{x_4^*}:&X_1&\longrightarrow &\mathcal L(X_4,X_1)  &  & &J_{x_1}:&X_4^*&\longrightarrow &\mathcal L(X_4,X_1)   \\
  & x  & \longmapsto & x_4^*\otimes x & & & & x^* &\longmapsto & x^*\otimes x_1
\end{array}
$$

$$
\begin{array}{cccccccccc}
\delta_{x_3}:&\mathcal L(X_3,X_2)&\longrightarrow &X_2& & &\delta_{x_2^*}:&\mathcal L(X_3,X_2)&\longrightarrow &X_3^*\\
& T&\longmapsto & Tx_3 & & & & T &\longmapsto & T^*x_2^*
\end{array}
$$
we have the  following commutative diagrams
$$\xymatrix{X_1\ar_{J_{x_4^*}}[d]\ar[rr]^A&&X_2&&X_4^*\ar_{J_{x_1}}[d]\ar[rr]^{B^*}&&X_3^*\\
\mathcal L(X_4,X_1)\ar[rr]^{L_AR_B}&&\mathcal L(X_3,X_2) \ar_{\delta_{x_3}}[u]&&\mathcal L(X_4,X_1)\ar[rr]^{L_AR_B}&&\mathcal L(X_3,X_2) \ar_{\delta_{x_2^*}}[u] }$$

\smallskip\noindent
This shows that $A$ and $B^*$ belong to the ideal generated by $L_AR_B$. In particular, if $L_AR_B$ is strictly singular,
then so are $A$ and $B^*$. Note that in general, the class of strictly singular operators is not closed under taking adjoints.

In the following, $\mathcal S(X,Y)$ and $\mathcal K(X,Y)$ will denote the spaces of strictly singular and of compact
operators, respectively, between the Banach spaces $X$ and~$Y$.

\section{Strict singularity and compactness}

\noindent
In this section we will study the relation between strict singularity and compactness of the multiplication operator.

When one of the operator coefficients is compact, the other strictly singular, and the space $X$ has the approximation property, which allows us to approximate compact operators by finite rank ones, then the multiplication operator is strictly singular. A version of this fact for more general operator ideals can be found in~\cite{LS}, but we include here a simple proof for convenience and to motivate further results.

\begin{proposition}
Let $X$, $Y$ be Banach spaces such that $Y$ has the approximation property and let $A,B\in\mathcal L(X,Y)$.
If $A\in\mathcal S(X,Y)$ and $B\in \mathcal K(X,Y)$, then $L_AR_B\colon\mathcal L(Y,X)\rightarrow\mathcal L(X,Y)$ is strictly singular.
\end{proposition}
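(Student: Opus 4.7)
The plan is to use the approximation property of $Y$ to reduce to the case where $B$ has finite rank, and then to exploit the finite-dimensional range of $B$ in order to factor $L_AR_B$ through a direct sum of copies of~$A$.

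Since $Y$ has the approximation property, the finite-rank operators from $X$ to $Y$ are norm-dense in $\mathcal K(X,Y)$. Choose therefore finite-rank operators $B_n\in\mathcal L(X,Y)$ with $\|B_n-B\|\to 0$. Then
$$\|L_AR_{B_n}-L_AR_B\|\leq\|A\|\,\|B_n-B\|\longrightarrow 0,$$
and since the class $\mathcal S$ of strictly singular operators is closed in operator norm, it suffices to prove that each $L_AR_{B_n}$ is strictly singular.

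Fix $n$ and put $Z_n:=B_n(X)$, a finite-dimensional subspace of $Y$. Factor $B_n=\iota_n\pi_n$, with $\pi_n\colon X\to Z_n$ the corestriction of $B_n$ and $\iota_n\colon Z_n\hookrightarrow Y$ the inclusion. For any $T\in\mathcal L(Y,X)$ one has $L_AR_{B_n}(T)=ATB_n=(AT\iota_n)\pi_n$, so $L_AR_{B_n}$ factors as
$$\mathcal L(Y,X)\xrightarrow{R_{\iota_n}}\mathcal L(Z_n,X)\xrightarrow{L_{A;Z_n}}\mathcal L(Z_n,Y)\xrightarrow{R_{\pi_n}}\mathcal L(X,Y).$$
Because $\mathcal S$ is an operator ideal, it is enough to show that the middle arrow $L_{A;Z_n}$ is strictly singular. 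Picking a basis $e_1,\dots,e_k$ of $Z_n$ yields isomorphisms $\mathcal L(Z_n,V)\cong V^k$ via $S\mapsto(Se_1,\dots,Se_k)$ for any Banach space $V$; under these identifications, $L_{A;Z_n}$ becomes the diagonal operator $A\oplus\cdots\oplus A\colon X^k\to Y^k$. Writing this operator as $\sum_{i=1}^{k}J_iAP_i$, where $P_i$ and $J_i$ are the coordinate projections and injections, exhibits it as an element of the operator ideal generated by $A$, and hence as a strictly singular operator.

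No step in this argument is genuinely difficult; the whole proof is driven by the reduction to the finite-rank case, and the one point to be careful about is to invoke the correct direction of the approximation property, namely that every compact operator with range in~$Y$ is a norm-limit of finite-rank operators whenever $Y$ has AP.
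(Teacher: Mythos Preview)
Your proof is correct. Both you and the paper use the approximation property of $Y$ to reduce to the finite-rank case and then invoke the closedness and ideal property of $\mathcal S$, so the overall architecture is the same. The difference lies in how the finite-rank case is handled. The paper first proves the rank-one case by a direct contradiction argument: assuming $L_AR_B$ is bounded below on some basic sequence $(T_n)$, one shows (after a small perturbation to guarantee that the vectors $T_ny_0$ span an infinite-dimensional subspace) that $A$ is bounded below on that span, contradicting $A\in\mathcal S$. The general finite-rank case is then obtained by summing rank-one pieces. Your argument bypasses this by factoring $B_n$ through its finite-dimensional range $Z_n$ and identifying $L_{A;Z_n}$ with the diagonal operator $A\oplus\cdots\oplus A$ on $X^k$, which is strictly singular by the ideal property alone. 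This is cleaner and avoids the basic-sequence and perturbation details; the paper's approach, on the other hand, makes the connection to the strict singularity of $A$ more explicit at the level of vectors.
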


\begin{proof}
Let us start with a weaker version of the statement.

{\bf Claim:} If $A$ is strictly singular and $B$ is a rank one operator, then $L_AR_B$ is strictly singular.

Indeed, let $y_0\in Y$ and $x_0^*\in X^*$ such that $B(x)=x_0^*(x)y_0$ for every $x\in X$. Suppose $L_AR_B$ is not strictly singular;
then there exist a normalised basic sequence $(T_n)\subseteq \mathcal L(Y,X)$ and $\alpha>0$ such that
\begin{equation}\label{eq:bdbelow}
\bigl\|\sum_n a_n AT_nB\bigr\|\geq \alpha \bigl\|\sum_n a_n T_n\bigr\|
\end{equation}
for every sequence $(a_n)$ of scalars.

Without loss of generality, we can assume that the linear span of $(T_n y_0)$ in $X$ is infinite dimensional;
indeed, otherwise pick $z^*\in Y^*$ such that $\|z^*\|=1$ and $z^*(y_0)\neq 0$ and for $\varepsilon>0$, let $(x_n)$ be an infinite sequence of linearly independent vectors in $X$ with $\|x_n\|=\varepsilon 2^{-n}$, and let
$$
\tilde {T_n}(y)=T_n(y)+z^*(y)x_n.
$$
Note that the linear span of $(\tilde {T_n}(y_0))$ is infinite dimensional and inequality~\eqref{eq:bdbelow} also holds for $\tilde {T_n}$, once $\varepsilon>0$ is small enough, as
$$
\|T_n-\tilde{T_n}\|\leq \varepsilon 2^{-n}.
$$
We have
\begin{align*}
\bigl\|A\bigl(\sum_n a_nT_ny_0\bigr)\bigr\|
    &=\bigl\|\sum_n a_n AT_ny_0\bigr\|\geq\frac{1}{\|x_0^*\|}\bigl\|\sum_n a_n AT_nB\bigr\|\\
    &\geq \frac{\alpha}{\|x_0^*\|}\bigl\|\sum_n a_n T_n\bigr\|\geq\frac{\alpha}{\|x_0^*\|\|y_0\|}\bigl\|\sum_n a_n T_ny_0\bigr\|.
\end{align*}
Hence, $A$ is bounded below on the span of $(T_nx_0)$ which is a contradiction with the fact that $A$ is strictly singular, and the claim is proved.

Now, if $B$ has finite rank, say $B=\sum_{i=1}^n B_i$ with $B_i$ of rank one, then $L_AR_B=\sum_{i=1}^n L_AR_{B_i}$ is strictly singular as a linear combination of strictly singular operators.

Finally, for a compact operator $B$, since $Y$ has the approximation property, we can find a sequence of finite rank operators $(B_n)$ with $\|B-B_n\|\rightarrow 0$. Each of $L_AR_{B_n}$ is strictly singular by the previous part of the proof, and $\|L_AR_B-L_AR_{B_n}\|\rightarrow 0$, thus, $L_AR_B$ is strictly singular too.
\end{proof}

By passing to adjoints, it easily follows
\begin{corollary}
Let $X$, $Y$ be Banach spaces such that $X^*$ has the approximation property and let $A,B\in\mathcal L(X,Y)$.
If $A\in\mathcal K(X,Y)$ and $B^*\in\mathcal S(Y^*,X^*)$, then $L_AR_B\colon\mathcal L(X,Y)\rightarrow\mathcal L(X,Y)$ is strictly singular.
\end{corollary}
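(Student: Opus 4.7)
The plan is to pass to adjoints and reduce the statement directly to the previous proposition. By Schauder's theorem, $A^{*}\in\mathcal K(Y^{*},X^{*})$, while $B^{*}\in\mathcal S(Y^{*},X^{*})$ by hypothesis. Since $X^{*}$ has the approximation property, the previous proposition applies verbatim with the roles of $X$, $Y$, $A$, $B$ played by $Y^{*}$, $X^{*}$, $B^{*}$, $A^{*}$ respectively (note: the strictly singular coefficient now sits on the \emph{left}, the compact one on the right, matching the form of the proposition). This gives that
\[
L_{B^{*}}R_{A^{*}}\colon\mathcal L(X^{*},Y^{*})\longrightarrow\mathcal L(Y^{*},X^{*}),\qquad S\longmapsto B^{*}SA^{*},
\]
is strictly singular.

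Next I would transfer this back to $L_{A}R_{B}$ using the transposition maps $\sigma\colon\mathcal L(Y,X)\to\mathcal L(X^{*},Y^{*})$, $T\mapsto T^{*}$, and $\tau\colon\mathcal L(X,Y)\to\mathcal L(Y^{*},X^{*})$, $T\mapsto T^{*}$; both are isometric embeddings. The identity $(ATB)^{*}=B^{*}T^{*}A^{*}$ yields the intertwining relation
\[
\tau\circ L_{A}R_{B}\;=\;L_{B^{*}}R_{A^{*}}\circ\sigma.
\]

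Now the ideal property of $\mathcal S$ does the rest. Since $\sigma$ is bounded and $L_{B^{*}}R_{A^{*}}$ is strictly singular, the right-hand side above is strictly singular, hence so is $\tau\circ L_{A}R_{B}$. Because $\tau$ is an isometric embedding, it cannot send an isomorphism on an infinite-dimensional subspace of $\mathcal L(X,Y)$ to a non-isomorphism; thus $L_{A}R_{B}$ must itself be strictly singular.

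The proof is essentially forced by the hint ``passing to adjoints'', and I do not foresee any substantial obstacle. The only points deserving care are: (i) checking that the target space of the adjoint operators ($X^{*}$) is the one required to carry the approximation property in the proposition, and (ii) lining up the commutative square so that the asymmetric roles of the strictly singular and compact operators swap under transposition. Both are routine verifications.
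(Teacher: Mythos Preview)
Your proof is correct and follows exactly the approach the paper indicates (``By passing to adjoints, it easily follows''); you have simply filled in the details of the adjoint/transposition argument, including the commutative square $\tau\circ L_AR_B=L_{B^*}R_{A^*}\circ\sigma$ and the observation that an isometric embedding on the left cannot destroy strict singularity. One cosmetic remark: the printed statement has $L_AR_B\colon\mathcal L(X,Y)\to\mathcal L(X,Y)$, but for $A,B\in\mathcal L(X,Y)$ the natural domain is $\mathcal L(Y,X)$ (as in the proposition), which is in fact what you use when defining~$\sigma$.
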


It was proved in~\cite{LST} that, when $X$ is a space of the form $L_p$ for $1\leq p\leq \infty$, $C(K)$ for $K$ compact Hausdorff, or $\mathcal L^1$, $L_AR_B$ is strictly singular in $\mathcal L(X)$ if and only if so are $A$ and $B^*$. It should be noted that the composition of two strictly singular endomorphisms on $X$ for every space in the previous list yields a compact operator. On the other hand, for the spaces
$\ell_p\oplus\ell_q\oplus\ell_r$ with $1<p<q<r<\infty$ and on $L_p[0,1]\oplus L_q[0,1]$ with $1<p<q<\infty$,
$p\neq 2\neq q$, there are examples of strictly singular operators $A$ and $B^*$ such that $L_AR_B$ is not strictly singular.
And in fact, these examples are made out of strictly singular operator whose composition is not compact.
The following observation together with the results of the next section provide a reason for this.
\begin{proposition}
For a Banach space $X$ the following statements are equivalent:
\begin{enumerate}[\rm(a)]
\item For every $A,B\in\mathcal L(X)$ with $A$ and $B^*$ strictly singular, it follows that $AB$ is compact.
\item For every $A,B\in\mathcal L(X)$ with $A$ and $B^*$ strictly singular, the operator $L_AR_B$ maps
      $\mathcal L(X)$ into $\mathcal K(X)$.
\end{enumerate}
\end{proposition}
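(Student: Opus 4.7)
The equivalence looks quite direct once one exploits that the strictly singular operators form a two-sided ideal (as recalled in the introduction). So my plan is to prove the two implications separately, each in a single short step.

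For the implication (a) $\Rightarrow$ (b), I would take arbitrary $A,B\in\mathcal L(X)$ with $A$ and $B^*$ strictly singular, and an arbitrary $T\in\mathcal L(X)$, and aim to show that $ATB=A(TB)\in\mathcal K(X)$ by applying (a) to the pair $(A,TB)$. The first operator $A$ is strictly singular by hypothesis, so it only remains to check that $(TB)^{*}=B^{*}T^{*}$ is strictly singular; but this is immediate from the ideal property of $\mathcal S$, since $B^{*}$ is strictly singular and $T^{*}$ is bounded. Therefore (a) gives $A(TB)=ATB\in\mathcal K(X)$, i.e.\ $L_AR_B(T)\in\mathcal K(X)$ for every $T$.

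For the reverse implication (b) $\Rightarrow$ (a), I would simply specialise the hypothesis to the identity: applied to $T=\mathrm{id}_X\in\mathcal L(X)$, assumption (b) gives $L_AR_B(\mathrm{id}_X)=A\,\mathrm{id}_X\,B=AB\in\mathcal K(X)$, which is exactly~(a).

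Since both directions reduce to a single observation, I do not expect any real obstacle here; the only point to be careful about is to invoke the ideal property on the correct side (for the adjoint $B^{*}T^{*}$ rather than on $TB$ directly), because strict singularity is not in general preserved under taking adjoints, as the authors emphasise in Section~\ref{s:preliminaries}.
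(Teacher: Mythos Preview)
Your proof is correct and follows essentially the same route as the paper: for (a)$\Rightarrow$(b) the paper also absorbs $T$ into one of the factors via the ideal property of $\mathcal S$ and then invokes~(a), and for (b)$\Rightarrow$(a) it likewise specialises to $T=I_X$. The only cosmetic difference is that the paper implicitly applies~(a) to the pair $(AT,B)$ (using that $AT$ is strictly singular) rather than to $(A,TB)$ as you do, but both variants are equally valid.
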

\begin{proof}
(a)${}\Rightarrow{}$(b): Let $A,B\in\mathcal L(X)$ such that $A$ and $B^*$ are strictly singular. Since strictly singular operators form an ideal in $\mathcal L(X)$, for every $T\in\mathcal L(X)$ we have that $ATB$ is compact. Hence, $L_AR_B$ maps
      $\mathcal L(X)$ into $\mathcal K(X)$.

(b)${}\Rightarrow{}$(a): Suppose $L_AR_B(T)\in\mathcal K(X)$ for $A,B,T\in\mathcal L(X)$, with  $A$ and $B^*$ strictly singular. In particular, we have that $AB=L_AR_B(I_X)\in\mathcal K(X)$.
\end{proof}

The fact that on $L_p$ spaces the composition of two strictly singular operators yields a compact operator is due to V. D. Milman~\cite{M},
and has recently been extended to further classes of Banach spaces (see~\cite{FHST}); these include for instance the Lorentz spaces
$\Lambda(w,q)$ and certain Orlicz spaces. It is conceivable that the results in~\cite{LST} extend to these larger classes of spaces.

The condition that $B^*$ above, and not $B$, should be strictly singular is clarified by the following example.
Let $X=\ell_1\oplus c_0$. Using the fact that strictly singular and compact operators coincide on $\ell_1$ and $c_0$, and that
$\mathcal L(c_0,\ell_1)=\mathcal K(c_0,\ell_1)$ it is not hard to check that, $AB\in\mathcal K(X)$ whenever $A,B\in\mathcal S(X)$.
However, let $B(x,y)=(0,qx)$ where $q\in \mathcal L(\ell_1,c_0)$ is a quotient operator.
It follows that $L_AR_B$ cannot be strictly singular (because $B^*$ is not strictly singular).

\section{Strictly singular multiplication is weakly compact}

\noindent
It is well known that strictly singular operators on $C(K)$ spaces are weakly compact \cite{Pelczynski}. This fact can also be extended to operators on $C^*$-algebras \cite[Proposition 3.1]{OS}, and we will see this is also the case for multiplication operators on $\mathcal L(X)$, for a large class of spaces~$X$.

Our main reference for weak compactness of multiplication operators on $\mathcal L(X)$ is \cite{ST}. In particular, by \cite[Corollary~2.4]{ST},
if $X$ is a reflexive space with the approximation property, the multiplication operator $L_AR_B$ is weakly compact if and only if
$ATB\in\mathcal K(X)$ for every $T\in\mathcal L(X)$.

\begin{proposition}\label{p:ABinK}
Let $X$ be a reflexive Banach space with unconditional basis, and $A,B\in\mathcal L(X)$.
If\/ $L_AR_B$ is $c_0$-singular, then $AB\in\mathcal K(X)$.
\end{proposition}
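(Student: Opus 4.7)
The plan is to use the coordinate projections associated with the unconditional basis of $X$ to realise a canonical copy of $c_0$ inside $\mathcal L(X)$, and then to invoke the Bessaga--Pe{\l}czy\'{n}ski theorem to convert the $c_0$-singularity hypothesis into \emph{unconditional} convergence of the images of these projections under $L_AR_B$; from there, compactness of $AB$ will follow at once.

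More precisely, let $(e_n)$ be the unconditional basis of $X$ with biorthogonal functionals $(e_n^*)$, and set $P_n = e_n^*\otimes e_n \in \mathcal L(X)$. The unconditionality of $(e_n)$ immediately yields $\|\sum_n a_n P_n\|_{\mathcal L(X)} \asymp \sup_n |a_n|$ for finitely supported scalars $(a_n)$, so that the closed linear span of $\{P_n\}$ in $\mathcal L(X)$ is isomorphic to $c_0$ and the formal series $\sum_n P_n$ is weakly unconditionally Cauchy (WUC). Since $L_AR_B$ is bounded, $\sum_n L_AR_B(P_n)$ is WUC as well.

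The crux of the argument is to show that the $c_0$-singularity of $L_AR_B$ forces $\sum_n L_AR_B(P_n)$ to be unconditionally convergent. If it were not, the Bessaga--Pe{\l}czy\'{n}ski theorem would supply a subsequence $(L_AR_B(P_{n_k}))_k$ equivalent to the unit vector basis of $c_0$; combined with the equivalence of $(P_{n_k})_k$ to the $c_0$-basis, this would realise $L_AR_B$ as an isomorphism on the $c_0$-subspace $\overline{\mathrm{span}}\{P_{n_k}\}\subseteq\mathcal L(X)$, contradicting the hypothesis.

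With unconditional (hence norm) convergence in hand, the partial sums $S_N := A\bigl(\sum_{n=1}^N P_n\bigr)B$ converge in operator norm to some $S \in \mathcal L(X)$. Because $(e_n)$ is a Schauder basis of $X$, $\sum_{n=1}^N P_n \to I_X$ in the strong operator topology, so $S_N \to AB$ strongly; uniqueness of limits identifies $S = AB$. Each $S_N$ has rank at most $N$, so $AB$ is a norm-limit of finite-rank operators and therefore lies in $\mathcal K(X)$. The step requiring the most care is the Bessaga--Pe{\l}czy\'{n}ski reduction, but the concrete structure of the $c_0$-copy generated by the $P_n$ makes that verification routine.
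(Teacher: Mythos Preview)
Your argument takes a genuinely different route from the paper's. The paper argues by contrapositive: assuming $AB\notin\mathcal K(X)$, it uses reflexivity to find a weakly null sequence $(x_n)$ with $\|ABx_n\|\geq\delta$, perturbs $(Bx_n)$ to a block sequence with respect to the basis, and shows that $L_AR_B$ is bounded below on the span of the associated disjoint \emph{block} projections $U_n$ (the lower $c_0$-estimate coming from evaluating $\sum_n a_nAU_nB$ at the vectors~$x_j$). You instead work directly with the fixed rank-one projections $P_n$ and invoke Bessaga--Pe{\l}czy\'{n}ski to trade $c_0$-singularity for unconditional convergence of $\sum_n AP_nB$, then identify the norm limit with $AB$ via the strong convergence $\sum_{n\le N}P_n\to I_X$. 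The paper's proof is a bare-hands construction tailored to a witness of non-compactness; yours is a soft reduction to a classical dichotomy, and it yields the pleasant byproduct that $AB$ is the norm limit of the specific finite-rank operators $A\bigl(\sum_{n\le N}P_n\bigr)B$.

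One point needs correction. The Bessaga--Pe{\l}czy\'{n}ski theorem does not in general supply a \emph{subsequence} of a non-unconditionally-convergent WUC series $\sum_n Q_n$ equivalent to the $c_0$ basis; it supplies \emph{block sums} $y_k=\sum_{n\in F_k}Q_n$ over disjoint finite sets~$F_k$. Indeed, one can have $\|Q_n\|\to0$ while $\sum_n Q_n$ fails to converge unconditionally (in $c_0$, partition $\mathbb N$ into consecutive sets $B_k$ of size $k$ and put $Q_n=\tfrac1k e_k$ for $n\in B_k$; then $\sum_n Q_n$ is WUC, $\|Q_n\|\to0$, yet $\sum_{n\in B_k}Q_n=e_k$), and then no subsequence is seminormalised. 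Such examples actually arise in your setting: with $X=\ell_2$, $B=I$ and a suitable bounded $A$ one gets $\|AP_n\|\to0$ while $\|\sum_{n\in F_k}AP_n\|\equiv1$. The fix is routine: put $z_k=\sum_{n\in F_k}P_n$; these are disjoint basis projections, so the same unconditionality computation gives $\bigl\|\sum_k a_k z_k\bigr\|\approx\max_k|a_k|$, and $L_AR_B(z_k)=y_k$ shows that $L_AR_B$ is an isomorphism on $[z_k]\cong c_0$, contradicting $c_0$-singularity. With this adjustment your proof goes through.
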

\begin{proof}
Suppose $AB\notin\mathcal K(X)$. Since $X$ is reflexive, we can find a weakly null sequence $(x_n)_{n\in\mathbb N}\subseteq X$ such that $\|ABx_n\|\geq\delta>0$ for every $n\in \mathbb N$. In particular, $(Bx_n)_{n\in \mathbb N}$ is also weakly null, and $\|Bx_n\|\geq \delta/\|A\|$ for each $n\in \mathbb N$. Now, by a standard perturbation argument we can assume that $(Bx_n)_{n\in\mathbb N}$ is a block sequence with respect to the unconditional basis of $X$. Hence, we can consider $(U_n)_{n\in \mathbb N}\subseteq \mathcal L(X)$, a sequence of projections onto each of the corresponding blocks, that is $U_n\perp U_m$ with
$$
U_nBx_n=Bx_n.
$$
We claim that $L_AR_B$ is an isomorphism on the subspace $[U_n]$. To see this, first note that, using the unconditionality of the basis of $X$, it is easy to check that for any sequence of scalars $(a_n)_{n\in\mathbb N}$ we have
\begin{equation}
\Big\|\sum_n a_n U_n\Big\|\approx \max_n |a_n|.
\end{equation}
Therefore, we have
\begin{equation}
\Big\|\sum_n a_n AU_nB\Big\|= \Big\|L_AR_B\big(\sum_n a_n U_n\big)\Big\|\lesssim \|L_AR_B\| \max_n|a_n|.
\end{equation}
While, on the other hand, we have
\begin{equation}
\Big\|\sum_n a_n AU_nB\Big\|\gtrsim\sup_j \Big\|\sum_n a_n AU_nBx_j\Big\|=\sup_j \Big\|a_j AU_jBx_j\Big\|\geq \delta\max_j|a_j|.
\end{equation}
Hence, $L_AR_B|_{[U_n]}$ is an isomorphism as claimed. Since $(U_n)$ is equivalent to the unit basis of $c_0$, the proof is finished.
\end{proof}
\begin{corollary}\label{c:SS-WK}
Let $X$ be a Banach space, and $A,B\in \mathcal L(X)$. Consider the following statements for $L_AR_B\colon\mathcal L(X)\rightarrow\mathcal L(X)$:
\begin{enumerate}[\rm(i)]
\item $L_AR_B$ is strictly singular.
\item $L_AR_B$ is $c_0$-singular.
\item $L_AR_B$ is weakly compact.
\end{enumerate}
Clearly, {\rm(i)}${}\Rightarrow{}${\rm(ii)}. If $X$ is reflexive with an unconditional basis,
then we have {\rm(ii)}${}\Rightarrow${\rm(iii)}.
\end{corollary}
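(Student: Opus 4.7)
The forward implication (i)${}\Rightarrow{}$(ii) is immediate: any subspace isomorphic to $c_0$ is infinite-dimensional, so an operator that fails to be an isomorphism on any infinite-dimensional subspace is, in particular, not an isomorphism on any copy of $c_0$.

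The substantive step is (ii)${}\Rightarrow{}$(iii), and the plan is to reduce it to Proposition \ref{p:ABinK} via \cite[Corollary~2.4]{ST}. That cited result asserts that, for a reflexive Banach space $X$ with the approximation property, the multiplication operator $L_AR_B$ is weakly compact if and only if $ATB\in\mathcal K(X)$ for every $T\in\mathcal L(X)$. Since $X$ has an unconditional basis it has a Schauder basis, hence the approximation property, so this criterion applies under our hypotheses. The task therefore reduces to establishing $ATB\in\mathcal K(X)$ for every $T\in\mathcal L(X)$.

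So fix an arbitrary $T\in\mathcal L(X)$. The key observation is the identity
$$
L_A R_{TB} \;=\; L_A R_B \circ R_T
$$
of operators on $\mathcal L(X)$: indeed, for any $S\in\mathcal L(X)$ we have $(L_A R_B\circ R_T)(S)=L_A R_B(ST)=A(ST)B=L_A R_{TB}(S)$. Since $R_T$ is bounded on $\mathcal L(X)$ and $L_AR_B$ is $c_0$-singular by hypothesis, the composition $L_A R_{TB}$ is $c_0$-singular as well: if $Y\subseteq\mathcal L(X)$ were a subspace isomorphic to $c_0$ on which $L_AR_{TB}$ were an isomorphism, then $R_T|_Y$ would be bounded below, $R_T(Y)$ would be another copy of $c_0$ inside $\mathcal L(X)$, and $L_AR_B$ would be an isomorphism on $R_T(Y)$, contradicting the hypothesis.

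Now Proposition \ref{p:ABinK}, applied to the pair $(A,TB)$ in place of $(A,B)$, yields $ATB=A\cdot(TB)\in\mathcal K(X)$. As $T\in\mathcal L(X)$ was arbitrary, \cite[Corollary~2.4]{ST} delivers that $L_AR_B$ is weakly compact. The main point requiring care is the passage from the single conclusion ``$AB\in\mathcal K(X)$'' of Proposition \ref{p:ABinK} to the uniform conclusion ``$ATB\in\mathcal K(X)$ for every $T$''; this is precisely bridged by the factorization $L_A R_{TB}=L_AR_B\circ R_T$ together with the routine fact that $c_0$-singularity is preserved under right-composition with a bounded operator.
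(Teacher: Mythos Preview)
Your proof is correct and follows essentially the same route as the paper's: reduce (iii) to the range condition $L_AR_B(\mathcal L(X))\subseteq\mathcal K(X)$ via \cite[Corollary~2.4]{ST}, and obtain that by applying Proposition~\ref{p:ABinK} after absorbing the arbitrary $T$ into one of the coefficients. The only cosmetic difference is that you use the factorization $L_AR_{TB}=L_AR_B\circ R_T$ and apply Proposition~\ref{p:ABinK} to the pair $(A,TB)$, whereas the paper uses $L_{AT}R_B=L_AR_B\circ L_T$ and applies it to $(AT,B)$; both are equally valid.
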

\begin{proof}
Suppose $L_AR_B\colon\mathcal L(X)\rightarrow\mathcal L(X)$ is $c_0$-singular.
Then, for every $T\in\mathcal L(X)$, we have that $L_AR_BL_T=L_{AT}R_B$ is $c_0$-singular.
Hence, by Proposition~\ref{p:ABinK}, it follows that $ATB\in\mathcal K(X)$ for every $T\in\mathcal L(X)$.
As a result, $L_AR_B(\mathcal L(X))\subseteq\mathcal K(X)$, and \cite[Corollary 2.4]{ST} yields the claim.
\end{proof}

Note that for every infinite-dimensional reflexive space~$X$, there are weakly compact multiplication operators on $\mathcal L(X)$ which are not strictly singular. Indeed, let $A\in\mathcal K(X)$ and $B=I_X$, then by \cite[Proposition 2.8]{ST} $L_AR_B$ is weakly compact, but $L_AR_B$ is not strictly singular as $B^*$ is not strictly singular. The same would hold for non-reflexive $X$ as far as there is a weakly compact operator $B\in\mathcal L(X)$ such that $B^*$ is not strictly singular.

\begin{remark}
Proposition~\ref{p:ABinK} can be extended to the more general case when $X$ has an unconditional finite-dimensional decomposition.
\end{remark}

\section{Factorization of multiplication operators}\label{s:factorization}

\noindent
A classical result by J. Holub~\cite[Theorem 1]{Holub} states that every subspace of $\mathcal K(\ell_2)$ is either isomorphic to $\ell_2$ or contains a further subspace isomorphic to~$c_0$. We need a version of this dichotomy for $\mathcal K(\ell_p,\ell_q)$. Throughout this section we assume $1<p,q<\infty$.

First, recall that to any operator $T\in \mathcal K(\ell_p,\ell_q)$, we can associate the infinite matrix given by $T_{ij}=e_i^*(Te_j)$ for $i,j\in\mathbb N$, where $e_i^*,e_j$ denote the (unconditional) unit vector basis of $\ell_{q'}$ and $\ell_p$, respectively. For $n\in\mathbb N$, we will consider two particular projections in $\mathcal K(\ell_p,\ell_q)$, $E_n$ and $P_n$, given for $T\in\mathcal K(\ell_p,\ell_q)$, by
$$
E_n(T)_{ij}=\left\{
         \begin{array}{cc}
           T_{ij} & \textrm{ if }\min\{i,j\}< n, \\
           0 & \textrm{ otherwise.} \\
         \end{array}
       \right.
$$
$$
P_n(T)_{ij}=\left\{
         \begin{array}{cc}
           T_{ij} & \textrm{ if }\max\{i,j\}\leq n, \\
           0 & \textrm{ otherwise.} \\
         \end{array}
       \right.
$$
It is well known that these define a family of uniformly bounded projections on $\mathcal K(\ell_p,\ell_q)$. Let $C=\sup_{n\in\mathbb N}\max\{\|P_n\|,\|E_n\|\}$.

Given natural numbers $m<n$, and $1<p<\infty$, let $Q_{[m,n]}^{(p)}$ denote the basis projection onto the span of $(e_i)_{i=m}^n$ in $\ell_p$; or, in other words, for $(x_i)\in \ell_p$,
$$
Q_{[m,n]}^{(p)}\Big(\sum_{i\in\mathbb N} x_i e_i\Big)=\sum_{i=m}^n x_i e_i.
$$
Clearly, if $m_1<n_1<m_2<n_2$, then we have $Q_{[m_1,n_1]}^{(p)}Q_{[m_2,n_2]}^{(p)}=Q_{[m_2,n_2]}^{(p)}Q_{[m_1,n_1]}^{(p)}=0$.

We will say that $(S_k)\subset \mathcal K(\ell_p,\ell_q)$ is a \textit{block-diagonal sequence} when for every $k\in\mathbb N$ there exist $p_k<q_k<p_{k+1}$ so that,
$$
S_k=Q^{(q)}_{[p_k,q_k]} S_k Q^{(p)}_{[p_k,q_k]}.
$$
Also, a single operator $S\in \mathcal L (\ell_p,\ell_q)$ is called \textit{block-diagonal} if there is a block-diagonal sequence $(S_k)$ such that for every $x\in \ell_p$, $Sx=\sum_{k\in\mathbb N} S_k x$.

\begin{lemma}\label{block-diagonal}
Let $1<p, q<\infty$, and $(S_k)\subset \mathcal K(\ell_p,\ell_q)$ a semi-normalised block-diagonal sequence. If $p\leq q$, then $(S_k)$ is equivalent to the unit vector basis of $c_0$. If $q<p$, then $(S_k)$ is equivalent to the unit vector basis of $\ell_r$  with $r=\frac{pq}{p-q}$.
\end{lemma}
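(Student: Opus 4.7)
The plan is to exploit the block-diagonal structure to reduce the operator norm of $T = \sum_k a_k S_k$ to a scalar sequence-space estimate. For any $x \in \ell_p$, set $x_k = Q^{(p)}_{[p_k,q_k]} x$; the disjointness of the intervals $[p_k,q_k]$ gives $\sum_k \|x_k\|_p^p \leq \|x\|_p^p$, and the range restriction $S_k = Q^{(q)}_{[p_k,q_k]} S_k Q^{(p)}_{[p_k,q_k]}$ forces the vectors $S_k x_k \in \ell_q$ to have pairwise disjoint supports. Consequently
\[
\|Tx\|_q^q = \sum_k |a_k|^q \|S_k x_k\|_q^q.
\]
Since $(S_k)$ is semi-normalised, $0 < \delta \leq \|S_k\| \leq M < \infty$ uniformly in $k$, so $\|S_k x_k\|_q \leq M\|x_k\|_p$ from above, and for each $k$ one may choose a unit vector $x_k^0 \in \ell_p$ supported on $[p_k,q_k]$ with $\|S_k x_k^0\|_q \geq \delta/2$.

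\emph{Case $p \leq q$.} Combining the identity above with $\|S_k x_k\|_q \leq M\|x_k\|_p$ yields $\|Tx\|_q^q \leq M^q \max_k|a_k|^q \sum_k \|x_k\|_p^q$, and the continuous embedding $\ell_p \hookrightarrow \ell_q$ gives $\sum_k \|x_k\|_p^q \leq (\sum_k \|x_k\|_p^p)^{q/p} \leq \|x\|_p^q$, so $\|T\| \leq M \max_k|a_k|$. For the lower bound, test $T$ against $x_{k_0}^0$ where $|a_{k_0}| = \max_k |a_k|$: the disjoint supports give $\|T x_{k_0}^0\|_q = |a_{k_0}|\|S_{k_0}x_{k_0}^0\|_q \geq (\delta/2)\max_k|a_k|$. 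This establishes equivalence with the $c_0$-basis.

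\emph{Case $q < p$.} Set $r = pq/(p-q)$. For the upper bound apply H\"older's inequality with conjugate exponents $s = p/(p-q)$ and $s' = p/q$, chosen precisely so that $qs = r$ and $qs' = p$:
\[
\sum_k |a_k|^q \|x_k\|_p^q \leq \Bigl(\sum_k |a_k|^r\Bigr)^{q/r} \Bigl(\sum_k \|x_k\|_p^p\Bigr)^{q/p} \leq \Bigl(\sum_k |a_k|^r\Bigr)^{q/r} \|x\|_p^q,
\]
yielding $\|T\| \leq M(\sum_k|a_k|^r)^{1/r}$. For the lower bound, test $T$ against the H\"older extremiser $x = C^{-1}\sum_k |a_k|^{q/(p-q)} x_k^0$, where $C = (\sum_k |a_k|^r)^{1/p}$ makes $\|x\|_p = 1$ thanks to the disjoint supports of the $x_k^0$. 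A direct computation, using $pq/(p-q) = r$ and $1/q - 1/p = 1/r$, produces $\|Tx\|_q \geq (\delta/2)(\sum_k|a_k|^r)^{1/r}$.

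The main technical point is the exponent bookkeeping in the $q<p$ case, both in identifying the conjugate H\"older pair $(p/(p-q),\,p/q)$ that produces the exponent $r$ and in recognising the extremiser $c_k \propto |a_k|^{q/(p-q)}$. The case $p \leq q$ is essentially immediate from the inclusion $\ell_p \subset \ell_q$. No genuinely infinite-dimensional difficulty arises, since the block-diagonal hypothesis makes $T$ behave diagonally on its coefficient sequence and everything reduces to scalar $\ell_p$--$\ell_q$ comparisons.
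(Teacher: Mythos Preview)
Your proof is correct and follows essentially the same route as the paper's: decompose $x$ into its disjoint blocks, use the disjointness of the ranges to get $\|Tx\|_q^q=\sum_k|a_k|^q\|S_kx_k\|_q^q$, then in the case $p\le q$ invoke $\ell_p\hookrightarrow\ell_q$ for the upper bound and test against a single block for the lower, while in the case $q<p$ apply H\"older with the conjugate pair $(p/(p-q),p/q)$ for the upper bound and test against the extremiser $\sum_k|a_k|^{q/(p-q)}x_k^0$ for the lower. The only cosmetic differences are your use of $\delta/2$ where the paper uses the exact infimum $c=\inf_k\|S_k\|$, and a swap of the labels $s,s'$ in the H\"older pair.
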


\begin{proof}
By hypothesis, for every $k\in\mathbb N$ there exist $p_k<q_k<p_{k+1}$ so that
$$
S_k=Q^{(q)}_{[p_k,q_k]} S_k Q^{(p)}_{[p_k,q_k]}.
$$

Suppose first that $p\leq q$, and let us see that in this case, $(S_k)$ is equivalent to the unit vector basis of~$c_0$. Indeed, given scalars $(a_k)$, since $c=\inf_k\|S_k\|>0$, for every $k\in\mathbb N$ there is $x_k\in \ell_p$ with $\|x_k\|_p=1$, $\|S_k x_k\|_q\geq c$ and $Q^{(p)}_{[p_k,q_k]}x_k=x_k$. Thus, for every $k\in\mathbb N$,
$$
\bigl\|\sum_{j\in\mathbb N} a_j S_j\bigr\|\geq\bigl\|\sum_{j\in\mathbb N} a_j S_j(x_k)\bigr\|_q=\bigl\|\sum_{j\in\mathbb N} a_j Q^{(q)}_{[p_j,q_j]} S_j Q^{(p)}_{[p_j,q_j]}(x_k)\bigr\|_q=\|a_k S_k(x_k)\|_q\geq c|a_k|,
$$
which yields the estimate
$$
\bigl\|\sum_{j\in\mathbb N} a_j S_j\bigr\|\geq c\sup_{j\in\mathbb N}|a_j|.
$$
On the other hand, let $K=\sup_k \|S_k\|$ and for $x \in\ell_p$ with $\|x\|_p=1$, let $x_k=Q^{(p)}_{[p_k,q_k]} x$. Note that
$$
S_k(x)= S_k(x_k).
$$
Hence, for scalars $(a_k)$ we have
\begin{eqnarray*}
\bigl\|\sum_{j\in\mathbb N} a_j S_j x\bigr\|_q  &=&  \Big(\sum_{j\in\mathbb N} \|a_j S_j(x)\|_q^q\Big)^{\frac1q} \\
   &=& \Big(\sum_{j\in\mathbb N} \|a_j S_j(x_j)\|_q^q\Big)^{\frac1q} \\
   &\leq& \sup_{j\in\mathbb N}|a_j|\|S_j\|\Big(\sum_{j\in\mathbb N} \|x_j\|_p^p\Big)^{\frac1p} \\
   &\leq& K\sup_{j\in\mathbb N}|a_j|.
\end{eqnarray*}
Therefore, $(S_k)$ is equivalent to the unit vector basis of~$c_0$ as claimed.

Now, suppose that $q<p$. As above, given scalars $(a_j)$, letting $K=\sup_j \|S_j\|$, we have
\begin{eqnarray*}
\bigl\|\sum_{j\in\mathbb N} a_j S_j \bigr\|  &=& \sup_{\|x\|_p\leq1} \bigl\|\sum_{j\in\mathbb N} a_j S_j x\bigr\|_q  =\sup_{\|x\|_p\leq1} \bigl\|\sum_{j\in\mathbb N} a_j Q^{(q)}_{[p_j,q_j]} S_j Q^{(p)}_{[p_j,q_j]} x\bigr\|_q\\
 &=&\sup_{\|x\|_p\leq1}\Big(\sum_{j\in\mathbb N} \|a_j Q^{(q)}_{[p_j,q_j]} S_j Q^{(p)}_{[p_j,q_j]} x\|_q^q\Big)^{\frac1q} \leq K \sup_{\|x\|_p\leq1}\Big(\sum_{j\in\mathbb N} |a_j|^q \| Q^{(p)}_{[p_j,q_j]} x\|_p^q\Big)^{\frac1q}.
\end{eqnarray*}
Now, if we set $s=\frac{p}{q}>1$ and $\frac1s+\frac1{s'}=1$, then by H\"older's inequality it follows that
$$
\bigl\|\sum_{j\in\mathbb N} a_j S_j \bigr\| \leq K \sup_{\|x\|_p\leq1} \Big(\sum_{j\in\mathbb N} |a_j|^{qs'}\Big)^{\frac1{qs'}}\Big(\sum_{j\in\mathbb N} \| Q^{(p)}_j x\|_p^{qs}\Big)^{\frac1{qs}}\leq K\Big(\sum_{j\in\mathbb N} |a_j|^r\Big)^{\frac1r},
$$
where $r=\frac{pq}{p-q}$.

For the converse inequality, as above, since $c=\inf_k\|S_k\|>0$, for every $k\in\mathbb N$ there is $x_k\in \ell_p$ with $\|x_k\|_p=1$, $\|S_k x_k\|_q\geq c$ with $Q^{(p)}_{[p_k,q_k]}x_k=x_k$. Given any sequence $(a_k)\in \ell_r$, let $x=\sum_k |a_k|^{\frac{r-q}{q}} x_k.$ Note that
$$
\|x\|_p=\Big(\sum_{k\in\mathbb N}  |a_k|^{\frac{r-q}{q}p}\Big)^{\frac1p}=\Big(\sum_{k\in\mathbb N}  |a_k|^r\Big)^{\frac1p}.
$$
Hence, we have
\begin{eqnarray*}
\bigl\|\sum_{j\in\mathbb N} a_j S_j \bigr\|  &\geq&\frac{\bigl\|\sum_{j\in\mathbb N} a_j S_j x\bigr\|_q}{\|x\|_p}=\frac{\Big(\sum_{j\in\mathbb N} \|a_j S_j x\|_q^q\Big)^{\frac1q}}{\|x\|_p}\\
&\geq&c \Big(\sum_{j\in\mathbb N} |a_j|^r\Big)^{\frac1q-\frac1p}=c\Big(\sum_{j\in\mathbb N} |a_j|^r\Big)^{\frac1r},
\end{eqnarray*}
as claimed.
\end{proof}

\begin{lemma}\label{l:p-Holub}
Let $1<p, q<\infty$, $\frac1p+\frac{1}{p'}=1$ and $M$ a closed infinite-dimensional subspace of $\mathcal K(\ell_p,\ell_q)$. The following dichotomy holds:
\begin{enumerate}
\item There is $n\in\mathbb N$ such that $E_n|_M$ is an isomorphism, in which case $M$ contains an isomorphic copy of $\ell_q$ or $\ell_{p'}$; or,
\item there exist a normalised sequence $(T_k)\subset M$ and a semi-normalised block-diagonal sequence $(S_k)$ such that $\|T_k-S_k\|\rightarrow 0$ as $k\rightarrow \infty$. In particular, $(T_k)$ is equivalent to the unit vector basis of $c_0$ when $p\leq q$, and to the unit vector basis of $\ell_r$ with $r=\frac{pq}{p-q}$, when $q<p$.
\end{enumerate}
\end{lemma}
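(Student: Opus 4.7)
The plan is a Holub-type dichotomy: either some $E_n$ is bounded below on $M$, in which case a copy of $\ell_q$ or $\ell_{p'}$ is inherited from the structure of the range of $E_n$; or else a gliding-hump construction on the sequences $(E_n)$ and $(P_n)$ produces a block-diagonal sequence. To set up the first case, observe that an operator in the range of $E_n$ is a compact matrix supported on the first $n-1$ rows together with the first $n-1$ columns. Each such row represents a functional in $\ell_{p'}$ and each such column represents a vector in $\ell_q$, so the range of $E_n$ is isomorphic to $\ell_{p'}^{n-1} \oplus \ell_q^{n-1}$ modulo a finite-dimensional overlap, i.e.\ to $\ell_q \oplus \ell_{p'}$ up to isomorphism. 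If $E_n|_M$ is an isomorphism for some $n$, then $M$ embeds into $\ell_q \oplus \ell_{p'}$, and a standard gliding-hump argument applied to the two coordinate projections shows $M$ contains a copy of $\ell_q$ or $\ell_{p'}$ (one projection must fail to be strictly singular on some further subspace, and infinite-dimensional subspaces of $\ell_q$ or of $\ell_{p'}$ contain $\ell_q$ or $\ell_{p'}$, respectively).

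For the second case, assume $E_n|_M$ is not an isomorphism for any $n$. The key tools are: (a) for every $n$ and every $\varepsilon > 0$ there is a normalised $T \in M$ with $\|E_n T\| < \varepsilon$; (b) $P_m(I - E_n)T = Q_{[n,m]}^{(q)} T Q_{[n,m]}^{(p)}$, so this operator is block-diagonal; and (c) for every compact $T$, $\|T - P_m T\| \to 0$ as $m \to \infty$, which follows from strong convergence $Q_n^{(p)} \to I$ on $\ell_p$ and $Q_n^{(q)} \to I$ on $\ell_q$ together with the relative compactness of $T(B_{\ell_p})$ in $\ell_q$. Fix a summable positive sequence $(\varepsilon_k)$ and build $(T_k)$ and blocks $[p_k, q_k]$ inductively: take any normalised $T_1 \in M$, let $p_1 = 1$, and pick $q_1$ so that $\|T_1 - Q_{[1,q_1]}^{(q)} T_1 Q_{[1,q_1]}^{(p)}\| < \varepsilon_1$ using (c); having constructed $T_1, \dots, T_k$ with $p_1 < q_1 < \cdots < p_k < q_k$, set $p_{k+1} := q_k + 1$, pick a normalised $T_{k+1} \in M$ with $\|E_{p_{k+1}} T_{k+1}\| < \varepsilon_{k+1}/2$ by (a), and choose $q_{k+1} > p_{k+1}$ so that by (c) the tail $(I - E_{p_{k+1}})T_{k+1}$ is $(\varepsilon_{k+1}/2)$-close to $P_{q_{k+1}}(I - E_{p_{k+1}})T_{k+1}$. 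Setting $S_k := Q_{[p_k,q_k]}^{(q)} T_k Q_{[p_k,q_k]}^{(p)}$ yields a semi-normalised block-diagonal sequence with $\|T_k - S_k\| < \varepsilon_k$, and the final assertion about equivalence to the $c_0$- or $\ell_r$-basis is then immediate from Lemma~\ref{block-diagonal} combined with summable-perturbation stability of bases.

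The main obstacle is arranging the gliding hump so that the consecutive support intervals $[p_k, q_k]$ are strictly ordered; this is handled by the prescription $p_{k+1} = q_k + 1$, which requires that one may invoke failure of $E_{p_{k+1}}|_M$ to be an isomorphism at an arbitrary starting index, precisely the standing assumption of this case. The compactness of operators in $\mathcal K(\ell_p, \ell_q)$ is essential in step (c): it is what allows the tail $(I - E_{p_{k+1}})T_{k+1}$ to be approximated by the finite block $Q_{[p_{k+1},q_{k+1}]}^{(q)} T_{k+1} Q_{[p_{k+1},q_{k+1}]}^{(p)}$; without it, the truncation would fail and the construction would break down.
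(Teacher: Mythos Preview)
Your proposal is correct and follows essentially the same approach as the paper: both proofs split into the same dichotomy, identify the range of $E_n$ with $\ell_q\oplus\ell_{p'}$ in case~(1), and in case~(2) run the identical gliding-hump construction alternating between the hypothesis that no $E_n|_M$ is an isomorphism and the compactness-driven tail truncation $\|T-P_mT\|\to0$, then invoke Lemma~\ref{block-diagonal}. Your write-up is slightly more explicit in justifying why the range of $E_n$ has the claimed form and why an embedding into $\ell_q\oplus\ell_{p'}$ forces a copy of one summand, but the underlying argument is the same.
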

\begin{proof}

Suppose first that there is $n\in\mathbb N$ such that the restriction $E_n|_M$ is an isomorphism. Since the range of $E_n$ is isomorphic to $\ell_q\oplus\ell_{p'}$ it follows that $M$ either contains a subspace isomorphic to $\ell_q$ or~$\ell_{p'}$.

On the other hand, suppose that for every $n\in\mathbb N$, $E_n|_M$ is never an isomorphism. In this case, we will inductively construct two sequences as required. Indeed, pick arbitrary $T_1\in M$ with $\|T_1\|=1$ and, by compactness, let $n_1\in \mathbb N$ such that
$$
\|T_1-P_{n_1}(T_1)\|<\frac12.
$$
Since, $E_{n_1}|_M$ is not an isomorphism, there is $T_2\in M$ with $\|T_2\|=1$ and
$$
\|E_{n_1}(T_2)\|<\frac12.
$$
Let $n_2\in\mathbb N$ be such that
$$
\|T_2-P_{n_2}(T_2)\|<\frac14.
$$
Continuing in this way, we produce inductively an increasing sequence $(n_k)_{k\in\mathbb N}\subseteq \mathbb N$ and $(T_k)_{k\in\mathbb N}\subseteq M$ such that for every $k\in \mathbb N$:
\begin{enumerate}
\item $\|T_k\|=1$
\item $\|E_{n_k}(T_{k+1})\|<2^{-k}$
\item $\|T_k-P_{n_k}(T_k)\|<2^{-k}$.
\end{enumerate}
Let $S_k=P_{n_k}(T_k)-E_{n_{k-1}}P_{n_k}(T_k)$, which clearly satisfy
$$
S_k=Q^{(q)}_{[n_{k-1}+1,n_k]} S_k Q^{(p)}_{[n_{k-1}+1,n_k]}.
$$
We have
$$
\|T_k-S_k\|\leq \|T_k-P_{n_k}(T_k)\|+\|E_{n_{k-1}}(T_k)\|+\|E_{n_{k-1}}(T_k-P_{n_k}(T_k))\|\leq (C+3)2^{-k}
$$
In particular, $(T_k)$ and $(S_k)$ are equivalent basic sequences in $\mathcal K(\ell_p,\ell_q)$.  The conclussion follows from Lemma \ref{block-diagonal}.
\end{proof}

\begin{remark}\label{r:sums}
The previous argument also holds for $\mathcal K((\oplus X_n)_{\ell_p},(\oplus Y_n)_{\ell_q})$,
where $(X_n)$ and $(Y_n)$ are sequences of finite-dimensional subspaces.
\end{remark}

\begin{remark}
When $1<q<p<\infty$, since $\mathcal L(\ell_p,\ell_q)=\mathcal K(\ell_p,\ell_q)$, by \cite[Corollary 2]{K}, we know that $\mathcal K(\ell_p,\ell_q)$ is a reflexive space. The above lemma is somehow more informative, since any infinite dimensional subspace of $\mathcal K(\ell_p,\ell_q)$ contains one of the reflexive spaces $\ell_q$, $\ell_{p'}$ or $\ell_r$.
\end{remark}

\begin{theorem}\label{t:factor_pq}
Let $p<q$ and $A,B\in\mathcal L(\ell_p,\ell_q)$. Then $L_AR_B\colon\mathcal L(\ell_q,\ell_p)\rightarrow\mathcal L(\ell_p,\ell_q)$ is strictly singular.
\end{theorem}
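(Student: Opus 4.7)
By Pitt's theorem $\mathcal{L}(\ell_q,\ell_p)=\mathcal{K}(\ell_q,\ell_p)$, and since $TB$ is compact whenever $T$ is, $L_AR_B(T)=A(TB)$ lies in $\mathcal{K}(\ell_p,\ell_q)$. The strategy is to exploit Lemma~\ref{l:p-Holub} twice, once in the domain $\mathcal{K}(\ell_q,\ell_p)$ and once in the codomain $\mathcal{K}(\ell_p,\ell_q)$. Arguing by contradiction, suppose some closed infinite-dimensional $M\subseteq\mathcal{K}(\ell_q,\ell_p)$ has $L_AR_B|_M$ bounded below. Applying Lemma~\ref{l:p-Holub} with $p$ and $q$ interchanged, we obtain either (i) $E_n|_M$ is an isomorphism for some~$n$, and since $E_n(\mathcal{K}(\ell_q,\ell_p))\cong\ell_{q'}^n\oplus\ell_p^n$ we pass to a subspace $M_1\subseteq M$ isomorphic to $\ell_p$ or $\ell_{q'}$; or (ii) a semi-normalised block-diagonal sequence $(S_k)\subset M$ equivalent to the unit vector basis of~$\ell_r$, with $r=pq/(q-p)$, giving a subspace $M_1\cong\ell_r$.

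In every case $M_1$ is reflexive, so its image under the isomorphism $L_AR_B|_{M_1}$ is a reflexive infinite-dimensional subspace of $\mathcal{K}(\ell_p,\ell_q)$. A second application of Lemma~\ref{l:p-Holub}, this time inside $\mathcal{K}(\ell_p,\ell_q)$ (where the block-diagonal alternative for $p\leq q$ would yield $c_0$, excluded by reflexivity), forces this image to contain a copy of $\ell_q$ or $\ell_{p'}$. Pulling back through $L_AR_B|_{M_1}$, the subspace $M_1$ itself contains $\ell_q$ or $\ell_{p'}$. By Pe{\l}czy\'{n}ski's theorem every infinite-dimensional subspace of $\ell_s$ contains a further copy of~$\ell_s$, hence $\ell_s\cong\ell_q$ or $\ell_s\cong\ell_{p'}$ where $s\in\{p,q',r\}$. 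This fails as soon as $p,q$ avoid the exceptional relations $p=2$, $q=2$, $q=2p$ or $q=p/(2-p)$, giving the desired contradiction.

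The main obstacle is closing the argument in those exceptional parameter configurations. I would handle them by returning to the concrete structure. In case~(i), a perturbation/subsequence argument reduces $M_1$ to a \emph{pure column} copy $\{\psi\otimes e_{j_0}^*:\psi\in\ell_p\}\subseteq\mathcal{K}(\ell_q,\ell_p)$; on it $L_AR_B$ acts as $\psi\mapsto(B^*e_{j_0}^*)\otimes(A\psi)$, so strict singularity transfers directly from that of $A\colon\ell_p\to\ell_q$ (every bounded operator $\ell_p\to\ell_q$ is strictly singular for $p<q$, since $\ell_p$ does not embed in~$\ell_q$ by Pe{\l}czy\'{n}ski). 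Pure-row $\ell_{q'}$-copies are handled symmetrically via $B^*\colon\ell_{q'}\to\ell_{p'}$. In case~(ii), a gliding-hump/diagonal subsequence argument---using that $\|P_M^{(q)}A\,Q^{(p)}_{[a,b]}\|\to0$ as $a\to\infty$ for each fixed~$M$ (since each row of $A$ lies in~$\ell_{p'}$) and that $\|Q^{(q)}_kB\,(I-P_N^{(p)})\|\to0$ as $N\to\infty$ for each fixed~$k$ (by compactness of $Q^{(q)}_kB$)---extracts a subsequence of $(S_k)$ for which $(AS_kB)$ is norm-approximated by a block-diagonal sequence in $\mathcal{K}(\ell_p,\ell_q)$, hence $c_0$-equivalent by Lemma~\ref{block-diagonal}; this contradicts the $\ell_r$-equivalence of $(S_k)$.
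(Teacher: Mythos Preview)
Your overall architecture---apply Lemma~\ref{l:p-Holub} in the domain, then in the codomain, and compare the resulting $\ell_s$-types---matches the paper's. Your treatment of case~(ii) is actually cleaner than the paper's: rather than first approximating $A,B$ by sums of block-diagonal operators (via \cite[Lemma~4.4]{SSTJT}) and then arguing for block-diagonal coefficients, you run the gliding-hump directly for arbitrary $A,B$. The two observations you isolate---$\|P_M^{(q)}A\,Q^{(p)}_{[a,\infty)}\|\to 0$ and $\|Q^{(q)}_{[m,\infty)}B\,P_N^{(p)}\|\to 0$---are exactly what is needed to show $E_M(AS_kB)\to 0$ for each fixed~$M$, and since each $AS_kB$ has finite rank a diagonal extraction gives a block-diagonal approximation, hence $c_0$-equivalence, contradicting the $\ell_r$-equivalence of $(S_k)$. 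This part is correct and self-contained. (Incidentally, once you have this, the exceptional relations $q=2p$ and $q=p/(2-p)$, which come only from $s=r$, are already covered; only $p=2$ and $q=2$ survive as genuine exceptions, both arising in case~(i).)

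The gap is in your handling of case~(i). Knowing that $E_n|_M$ is an isomorphism tells you only that $\|E_n T\|\gtrsim\|T\|$ for $T\in M_1$; it does \emph{not} say that $T$ is norm-close to $E_nT$, and in particular $T$ may carry a large piece $(I-E_n)T$ living in the tail block. So there is no perturbation that turns an element of $M_1$ into a rank-one column operator $\psi\otimes e_{j_0}^*$, and your formula $L_AR_B(\psi\otimes e_{j_0}^*)=(B^*e_{j_0}^*)\otimes A\psi$ simply does not describe $L_AR_B|_{M_1}$. Even if you reduce $E_n(M_1)$ to the ``column part'' $\ell_p^{\,n-1}$, the map $T\mapsto(Te_1,\dots,Te_{n-1})$ intertwines $L_A$ with $A^{\oplus(n-1)}$ only on the \emph{column side}; it gives no lower bound for $\|ATB\|$ because $ATB$ is governed by the full operator $T$, not by its first $n-1$ columns.

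The paper closes this case by a different, coordinate-free device that you can adopt verbatim: observe the two factorizations
\[
L_AR_B \;=\; R_{B;\ell_q}\circ L_{A;\ell_q}\quad\text{through }\mathcal K(\ell_q),
\qquad
L_AR_B \;=\; L_{A;\ell_p}\circ R_{B;\ell_p}\quad\text{through }\mathcal K(\ell_p).
\]
If $L_AR_B|_M$ is an isomorphism, then so are $L_{A;\ell_q}|_M$ and $R_{B;\ell_p}|_M$, hence $M_1$ embeds isomorphically into both $\mathcal K(\ell_p)$ and $\mathcal K(\ell_q)$. Now apply Lemma~\ref{l:p-Holub} inside each: every infinite-dimensional subspace of $\mathcal K(\ell_s)$ contains $\ell_s$, $\ell_{s'}$ or $c_0$. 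Since $p\ne q$ and at most one of $p,q$ equals~$2$, comparing the two lists forces a contradiction (e.g.\ if $p=2$ then $M_1\cong\ell_2$ embeds in $\mathcal K(\ell_q)$, but $\ell_2$ contains none of $\ell_q,\ell_{q'},c_0$ when $q\ne 2$). This replaces your ``pure column'' step and requires nothing about $A,B$ beyond boundedness.
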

\begin{proof}
Note that by Pitt's theorem $\mathcal L(\ell_q,\ell_p)=\mathcal K(\ell_q,\ell_p)$, so we simply consider the operator $L_AR_B\colon\mathcal K(\ell_q,\ell_p)\rightarrow\mathcal K(\ell_p,\ell_q)$.

First, let us assume that $A,B$ are both block-diagonal operators. Suppose $L_{A}R_{B}$ is not strictly singular. Therefore, there exists a closed subspace $M\subseteq\mathcal K(\ell_q,\ell_p)$ such that $L_AR_B|_M$ is an isomorphism. By Lemma \ref{l:p-Holub}, either \begin{enumerate}
\item[(1)] $M$ contains a subspace isomorphic to $\ell_p$ or $\ell_{q'}$; or,
\item[(2)] there exist a normalised sequence $(T_n)\subset M$ and a semi-normalised block-diagonal sequence $(S_n)\subset \mathcal K(\ell_q,\ell_p)$ such that $\|T_n-S_n\|\rightarrow 0$.
\end{enumerate}

In case (1), by another application of Lemma \ref{l:p-Holub} to the subspace $L_AR_B(M)\subset \mathcal K(\ell_p,\ell_q)$, we know that this space contains a further subspace which is isomorphic to $\ell_q$, $\ell_{p'}$ or $c_0$. Since $p\neq q$, and $L_AR_B|_M$ is an isomorphism, the only possibility would be that $p=p'=2$ or $q=q'=2$. In any of these cases, note that we have the factorizations
$$
\xymatrix{\mathcal K(\ell_q,\ell_p)\ar_{L_{A;\ell_q}}[dr]\ar[rr]^{L_AR_B}&&\mathcal K(\ell_p,\ell_q)&\mathcal K(\ell_q,\ell_p)\ar_{R_{B;\ell_p}}[dr]\ar[rr]^{L_AR_B}&&\mathcal K(\ell_p,\ell_q)\\
&\mathcal K(\ell_q)\ar[ur]_{R_{B;\ell_q}}&& &\mathcal K(\ell_p)\ar[ur]_{L_{A;\ell_p}}& }
$$
In particular, $M$ is isomorphic to a subspace of $\mathcal K(\ell_p)$ and also to a subspace of $\mathcal K(\ell_q)$. As $p$ and $q$ cannot both be equal to 2, by Lemma \ref{l:p-Holub}, we arrive at a contradiction. Hence, in case (1), $L_AR_B|_M$ cannot be an isomorphism.

Assume now case (2) holds. As $A$ and $B$ are block-diagonal operators, there exist $p_k<q_k<p_{k+1}$ and $r_k<s_k<r_{k+1}$ such that for $x\in \ell_p$ we have
$$
Ax=\sum_{k\in\mathbb N} Q^{(q)}_{[p_k,q_k]}AQ^{(p)}_{[p_k,q_k]}x,\quad\quad Bx=\sum_{k\in\mathbb N} Q^{(q)}_{[r_k,s_k]}BQ^{(p)}_{[r_k,s_k]}x.
$$
Also, there exist $m_k<n_k$ such that
$$
S_k= Q^{(q)}_{[m_k,n_k]}S_kQ^{(p)}_{[m_k,n_k]}.
$$

For each $k\in \mathbb N$, let
$$
\tilde m_k=\min\Big\{\min\{p_j: [p_j,q_j]\cap[m_k,n_k]\neq \emptyset\},\min\{r_j: [r_j,s_j]\cap[m_k,n_k]\neq \emptyset\}\Big\}
$$
and
$$
\tilde n_k=\max\Big\{\max\{q_j: [p_j,q_j]\cap[m_k,n_k]\neq \emptyset\},\max\{s_j: [r_j,s_j]\cap[m_k,n_k]\neq \emptyset\}\Big\}.
$$

We extract a subsequence of $(S_j)$ as follows: let $j_1=1$ and for $k\geq1$, take $j_k$ large enough so that $\tilde n_{j_{k-1}}<\tilde m_{j_k}$. By construction, it follows that for every $k\in \mathbb N$
$$
AS_{j_k}B=Q^{(q)}_{[\tilde m_{j_k},\tilde n_{j_k}]}AS_{j_k}BQ^{(p)}_{[\tilde m_{j_k},\tilde n_{j_k}]}.
$$
Hence,  $(AS_{j_k}B)$ is a semi-normalised block-diagonal sequence in $\mathcal K(\ell_p,\ell_q)$, and by Lemma \ref{block-diagonal}, $(AS_{j_k}B)$ is equivalent to the unit vector basis of $c_0$. Moreover, as $\|L_AR_B(T_{j_k}-S_{j_k})\|\rightarrow0$ when $k\rightarrow \infty$, by standard perturbation we have that  $(AT_{j_k}B)$ is also equivalent to the unit vector basis of $c_0$. However, by Lemma \ref{block-diagonal}, we know that $(S_{j_k})$ and $(T_{j_k})$ are equivalent to the unit vector basis or $\ell_r$ with $r=\frac{pq}{p-q}$. This is a contradiction with the assumption that $L_AR_B|_M$ is an isomorphism.

So far, we have shown that when both $A$ and $B$ are block-diagonal operators, then $L_AR_B:\mathcal K(\ell_q,\ell_p)\rightarrow \mathcal K(\ell_p,\ell_q)$ is strictly singular. Now, for arbitrary $A,B\in\mathcal L(\ell_p,\ell_q)$, and every $\varepsilon>0$, by \cite[Lemma 4.4(i)]{SSTJT}, there exist block-diagonal operators $A^\varepsilon_1,A^\varepsilon_2,B^\varepsilon_1,B^\varepsilon_2 \in\mathcal L(\ell_p,\ell_q)$ such that
$$
\|A-(A_1^\varepsilon + A_2^\varepsilon)\|<\varepsilon,\quad\quad  \|B-(B_1^\varepsilon + B_2^\varepsilon)\|<\varepsilon.
$$
Clearly, for every $\varepsilon>0$ we have
$$
L_{ A_1^\varepsilon + A_2^\varepsilon}R_{B_1^\varepsilon + B_2^\varepsilon}=L_{A_1^\varepsilon}R_{B_1^\varepsilon}+L_{A_1^\varepsilon}R_{B_2^\varepsilon}+L_{A_2^\varepsilon}R_{B_1^\varepsilon}+L_{A_2^\varepsilon}R_{B_2^\varepsilon}.
$$
By the above part of the proof, it follows that each of the $L_{A_i^\varepsilon}R_{B_j^\varepsilon}$ is strictly singular, thus so is the sum $L_{ A_1^\varepsilon + A_2^\varepsilon}R_{B_1^\varepsilon + B_2^\varepsilon}$. Finally, since $L_{ A_1^\varepsilon + A_2^\varepsilon}R_{B_1^\varepsilon + B_2^\varepsilon}\rightarrow L_AR_B$ as $\varepsilon\rightarrow0$, we conclude $L_AR_B$ is also strictly singular.
\end{proof}

We introduce the following class of operators.
\begin{definition}
Given $p<q$, we say that $T\in\mathcal L(X)$ is \textit{approximately $(\ell_p,\ell_q)$-factorizable\/}
if, for every $\varepsilon>0$, there exist operators $T_1^\varepsilon\colon X\rightarrow\ell_{p}$,
$T_2^\varepsilon\colon\ell_{p}\rightarrow\ell_{q}$ and $T_3^\varepsilon\colon\ell_{q}\rightarrow X$ such that
$$
\|T-T_3^\varepsilon T_2^\varepsilon T_1^\varepsilon\|\leq \varepsilon.
$$
\end{definition}

Note that the class of approximately $(\ell_p,\ell_q)$-factorizable operators forms a closed operator ideal contained in that of strictly singular operators. Indeed, with the above notation, every operator $T_2^\varepsilon\colon\ell_p\rightarrow\ell_q$ is strictly singular; since the strictly singular operators are a closed operator ideal, it follows that every approximately $(\ell_p,\ell_q)$-factorizable operator is strictly singular.
\begin{theorem}\label{t:factorizable}
Suppose that $A,B\in\mathcal L(X)$ are approximately $(\ell_p,\ell_q)$-factorizable operators for some $p<q$.
Then $L_AR_B\colon\mathcal L(X)\rightarrow\mathcal L(X)$ is strictly singular.
\end{theorem}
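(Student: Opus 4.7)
The plan is to reduce the statement to Theorem~\ref{t:factor_pq} by exhibiting $L_A R_B$ as a composition whose middle factor is of the form $L_{A_2} R_{B_2}$ with $A_2, B_2 \in \mathcal L(\ell_p,\ell_q)$, and then pass from the factored case to the general case using the fact that strictly singular operators form a closed two-sided ideal.

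First I would treat the exact factorization case. Suppose $A = A_3 A_2 A_1$ and $B = B_3 B_2 B_1$ with $A_1,B_1\colon X\to\ell_p$, $A_2,B_2\colon\ell_p\to\ell_q$, $A_3,B_3\colon\ell_q\to X$. For $T\in\mathcal L(X)$ one has
$$
ATB \;=\; A_3\bigl(A_2\,(A_1 T B_3)\,B_2\bigr)B_1,
$$
so $L_A R_B$ factors as
$$
\mathcal L(X)\;\xrightarrow{\;L_{A_1;\ell_q}R_{B_3;\ell_p}\;}\;\mathcal L(\ell_q,\ell_p)\;\xrightarrow{\;L_{A_2}R_{B_2}\;}\;\mathcal L(\ell_p,\ell_q)\;\xrightarrow{\;L_{A_3;\ell_p}R_{B_1;\ell_q}\;}\;\mathcal L(X),
$$
where the outer two arrows are bounded linear maps between operator spaces and the middle arrow is strictly singular by Theorem~\ref{t:factor_pq}. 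Since strictly singular operators form a two-sided ideal, the composition $L_A R_B$ is strictly singular.

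For the general case, given $\varepsilon>0$ choose factored approximations $A^\varepsilon = A_3^\varepsilon A_2^\varepsilon A_1^\varepsilon$ and $B^\varepsilon = B_3^\varepsilon B_2^\varepsilon B_1^\varepsilon$ with $\|A-A^\varepsilon\|\leq\varepsilon$ and $\|B-B^\varepsilon\|\leq\varepsilon$. The telescoping estimate
$$
\|L_A R_B - L_{A^\varepsilon}R_{B^\varepsilon}\|
\;\leq\; \|A-A^\varepsilon\|\,\|B\| + \|A^\varepsilon\|\,\|B-B^\varepsilon\|
\;\leq\; \varepsilon\bigl(\|B\|+\|A\|+\varepsilon\bigr)
$$
shows $L_{A^\varepsilon}R_{B^\varepsilon}\to L_A R_B$ in operator norm. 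By the previous paragraph each $L_{A^\varepsilon}R_{B^\varepsilon}$ is strictly singular, and since $\mathcal S(\mathcal L(X))$ is closed in $\mathcal L(\mathcal L(X))$, the limit $L_A R_B$ is also strictly singular.

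There is no serious obstacle: the only point that requires minor care is matching the domains and codomains in the commutative diagram so that the middle arrow lands in the precise setting $\mathcal L(\ell_q,\ell_p)\to\mathcal L(\ell_p,\ell_q)$ to which Theorem~\ref{t:factor_pq} applies; once that is arranged, the ideal property and a standard perturbation argument finish the proof.
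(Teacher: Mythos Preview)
Your proof is correct and follows essentially the same approach as the paper: factor $L_{A^\varepsilon}R_{B^\varepsilon}$ through $L_{A_2^\varepsilon}R_{B_2^\varepsilon}\colon\mathcal L(\ell_q,\ell_p)\to\mathcal L(\ell_p,\ell_q)$, invoke Theorem~\ref{t:factor_pq}, and pass to the limit using closedness of the strictly singular ideal. The only quibble is notational: in the paper's convention the subscript on $L_{A;X}$ or $R_{A;X}$ records the ``free'' space, so the outer maps in your display should read $L_{A_1;\ell_q}R_{B_3;X}$ and $L_{A_3;X}R_{B_1;\ell_q}$ rather than with $\ell_p$ in those slots---but the intended factorization is clear and correct.
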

\begin{proof}
For every $\varepsilon>0$, let $A_1^\varepsilon,B_1^\varepsilon\colon X\rightarrow \ell_{p}$,
$A_2^\varepsilon,B_2^\varepsilon\colon\ell_{p}\rightarrow\ell_{q}$ and
$A_3^\varepsilon,B_3^\varepsilon\colon\ell_{q}\rightarrow X$ be such that $\|A-A_3^\varepsilon A_2^\varepsilon A_1^\varepsilon\|\leq\varepsilon$ and $\|B-B_3^\varepsilon B_2^\varepsilon B_1^\varepsilon\|\leq\varepsilon$.
For convenience, set $A^\varepsilon=A_3^\varepsilon A_2^\varepsilon A_1^\varepsilon$ and $B^\varepsilon=B_3^\varepsilon B_2^\varepsilon B_1^\varepsilon$. The factorizations
$$
\xymatrix{X\ar_{A^\varepsilon_1}[d]\ar[rr]^{A^\varepsilon}&&X&&X\ar_{B^\varepsilon_1}[d]\ar[rr]^{B^\varepsilon}&&X\\
\ell_{p}\ar[rr]^{A^\varepsilon_2}&&\ell_{q}\ar_{A^\varepsilon_3}[u] && \ell_{p}\ar[rr]^{B^\varepsilon_2}&&\ell_{q}\ar_{B^\varepsilon_3}[u] }
$$
yield the following factorization for the corresponding multiplication operators:
$$
\xymatrix{\mathcal L(X)\ar_{L_{A^\varepsilon_1}R_{B^\varepsilon_3}}[d]\ar[rr]^{L_{A^\varepsilon} R_{B^\varepsilon}}&&\mathcal K(X)\\
\mathcal K(\ell_q,\ell_p)\ar_{L_{A^\varepsilon_2}R_{B^\varepsilon_2}}[rr]&&\mathcal K(\ell_p,\ell_q)\ar_{L_{A^\varepsilon_3}R_{B^\varepsilon_1}}[u]}
$$
Here we use implicitly Pitt's theorem which asserts $\mathcal L(\ell_q,\ell_p)=\mathcal K(\ell_q,\ell_p)$ for $p<q$.

Theorem~\ref{t:factor_pq} yields that $L_{A_2^\varepsilon}R_{B_2^\varepsilon}$ and, hence $L_{A^\varepsilon}R_{B^\varepsilon}$, is strictly singular for every $\varepsilon>0$.
Note that
\begin{equation*}
\begin{split}
\|L_AR_B-L_{A^\varepsilon}R_{B^\varepsilon}\| &\leq \|L_A-L_{A^\e}\|\|R_B\|+\|L_{A^\e}\|\|R_B-R_{B^\e}\|\\
        &\leq\|B\|\,\e+(\|A\|+\e)\,\e.
\end{split}
\end{equation*}
The conclusion follows from the fact that the strictly singular operators form a closed ideal.
\end{proof}

\section{Factorization of strictly singular operators on $L_p$}

\noindent
In this section we focus on the case $X=L_p([0,1])$, for $1<p<\infty$ and $p\neq2$, endowed with Lebesgue measure~$\mu$. For simplicity we will always write~$L_p$.
According to \cite[Theorem 2.9]{LST}, $A,B\in\mathcal S(L_p)$ if and only if $L_AR_B$ is strictly singular on $\mathcal L(L_p)$.
However, the proof of this result is considerably long, and a more concise argument would be desirable.
Our aim here is to show how factorization techniques can shed some light in this direction.

Recall that $T\in\mathcal S(L_p)$ is equivalent to $T^*\in\mathcal S(L_{p'})$, $\frac1p+\frac1{p'}=1$~\cite{W}.
Also recall that a subset $W\subseteq L_p$ is uniformly $p$-integrable when
$$
\lim_{\mu(A)\rightarrow 0}\sup_{f\in W}\|f\chi_A\|_p=0.
$$
An argument like \cite[III.C.12]{Woj}, see also \cite[Lemma 1]{JS}, yields:
\begin{lemma}\label{l:uniformly p-integrable}
Let\/ $W\subseteq L_p$ $(p\neq2)$ be a bounded convex symmetric set. The following are equivalent:
\begin{enumerate}[\rm(a)]
\item $W$ is uniformly $p$-integrable;
\item $W$ does not contain any sequence $(x_n)$ which is equivalent to the unit vector basis of\/ $\ell_p$ and spans a complemented subspace;
\item For every $\varepsilon>0$, there is $M_\varepsilon>0$ such that\/ $W\subseteq M_\varepsilon B_{L_\infty}+\varepsilon B_{L_p}$.
\end{enumerate}
\end{lemma}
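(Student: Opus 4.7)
The plan is to establish (a) $\Leftrightarrow$ (c) by a direct truncation and (a) $\Leftrightarrow$ (b) via the Kadec--Pe{\l}czy{\'n}ski dichotomy in $L_p$, following \cite[III.C.12]{Woj} and \cite[Lemma~1]{JS}.

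For (a) $\Rightarrow$ (c), given $\varepsilon > 0$ I pick $\delta > 0$ so that $\mu(A) < \delta$ implies $\sup_{f \in W}\|f\chi_A\|_p < \varepsilon$, and set $K := \sup_{f\in W}\|f\|_p$, $M_\varepsilon := K\delta^{-1/p}$. Chebyshev's inequality yields $\mu(\{|f| > M_\varepsilon\}) \leq \delta$ for every $f \in W$, so the splitting $f = f\chi_{\{|f|\leq M_\varepsilon\}} + f\chi_{\{|f|>M_\varepsilon\}}$ places $f$ in $M_\varepsilon B_{L_\infty} + \varepsilon B_{L_p}$. Conversely, any decomposition $f = g + h$ with $\|g\|_\infty \leq M_\varepsilon$ and $\|h\|_p \leq \varepsilon$ gives $\|f\chi_A\|_p \leq M_\varepsilon\mu(A)^{1/p} + \varepsilon$, which forces uniform $p$-integrability as $\mu(A) \to 0$ and then $\varepsilon \to 0$.

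For (a) $\Rightarrow$ (b), I argue contrapositively. Suppose $(x_n) \subseteq W$ is equivalent to the unit basis of $\ell_p$ and spans a complemented subspace; then $(x_n)$ is semi-normalized and weakly null. By (c), decompose $x_n = y_n + z_n$ with $(y_n)$ bounded in $L_\infty \subseteq L_2$ and $\|z_n\|_p \leq \varepsilon$. Since the $L_p$- and $L_2$-norms are comparable on $L_\infty$-bounded sets, and $(y_n)$ is weakly null up to $\varepsilon$ in $L_2$ (testing against a dense subset of $L_2$ contained in $L_{p'}$), a subsequence of $(y_n)$ --- and hence, up to $\varepsilon$, of $(x_n)$ --- is equivalent to the unit basis of $\ell_2$. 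For $p \neq 2$ this yields an embedding $\ell_2 \hookrightarrow \overline{\mathrm{span}}(x_n) \simeq \ell_p$, which is impossible; this is the classical Kadec--Pe{\l}czy{\'n}ski alternative.

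For the main direction (b) $\Rightarrow$ (a), assume $W$ is not uniformly $p$-integrable; pick $\varepsilon_0 > 0$, $(f_n) \subseteq W$, and measurable $A_n$ with $\mu(A_n) \to 0$ and $\|f_n\chi_{A_n}\|_p \geq \varepsilon_0$. In particular, $(f_n)$ is semi-normalized since $\varepsilon_0 \leq \|f_n\|_p \leq K$. Inductively extract $n_1 < n_2 < \cdots$ so that
\begin{equation*}
\sum_{j > k}\|f_{n_k}\chi_{A_{n_j}}\|_p < \varepsilon_0/4,
\end{equation*}
which is feasible by the dominated convergence theorem applied to each fixed $f_{n_k}$, and set $B_k := A_{n_k}\setminus\bigcup_{j>k}A_{n_j}$; the $B_k$ are pairwise disjoint and $\|f_{n_k}\chi_{B_k}\|_p \geq 3\varepsilon_0/4$. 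The disjointly supported $g_k := f_{n_k}\chi_{B_k}$ is equivalent to the unit basis of $\ell_p$ with complemented span via the biorthogonal projection $P(h) := \sum_k \|g_k\|_p^{-p}\langle h, \mathrm{sgn}(g_k)|g_k|^{p-1}\rangle\,g_k$. The genuine obstacle --- and the step that requires the full strength of the Kadec--Pe{\l}czy{\'n}ski theorem --- is to transfer this conclusion from the truncations $(g_k)$ to a subsequence of $(f_{n_k})$ itself, since $\|f_{n_k} - g_k\|_p = \|f_{n_k}\chi_{B_k^c}\|_p$ is not automatically small. A careful perturbation plus gliding-hump argument, crucially using $p \neq 2$ to block the Hilbertian alternative, shows that the residuals $r_k := f_{n_k}\chi_{B_k^c}$ contribute only summable errors after further thinning, so that $(f_{n_k})$ admits a subsequence equivalent to the unit basis of $\ell_p$ and spanning a complemented subspace of $L_p$. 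This subsequence lies in $W$, contradicting (b).
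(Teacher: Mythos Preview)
The paper does not supply a proof of this lemma; it simply defers to \cite[III.C.12]{Woj} and \cite[Lemma~1]{JS}.

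Your (a)${}\Leftrightarrow{}$(c) is correct. The real gap is in (b)${}\Rightarrow{}$(a): you never use the hypothesis that $W$ is convex and symmetric, and without it the implication fails. Take $W=\{f_n:n\geq1\}$ with $f_n=1+n^{1/p}\chi_{[0,1/n]}$; this set is bounded in $L_p$ and not uniformly $p$-integrable, yet every subsequence of $(f_n)$ converges weakly to~$1$ and so cannot be equivalent to the (weakly null) $\ell_p$ basis. Hence your assertion that the residuals $r_k=f_{n_k}\chi_{B_k^c}$ ``contribute only summable errors after further thinning'' is wrong as stated---in this example each $r_k$ carries the constant function~$1$ and no amount of thinning helps. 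The standard repair is to first exploit convexity and symmetry to pass to differences $\tfrac12(f_n-f_m)\in W$, which kills the common weak limit; only after that does the peaking/gliding-hump argument produce an $\ell_p$-sequence lying \emph{inside}~$W$.

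There is a second issue in (a)${}\Rightarrow{}$(b): you ignore the complementation hypothesis and try to force a subsequence of $(x_n)$ to be $\ell_2$-equivalent via the $L_2$-structure of the $L_\infty$-bounded truncations. For $p>2$ this is exactly the Kadec--Pe\l czy\'nski dichotomy and is fine, but for $p<2$ your intermediate claim is false: for $p<q<2$, an i.i.d.\ sequence of symmetric $q$-stable random variables is uniformly $p$-integrable in $L_p$ yet equivalent to the $\ell_q$ basis, not~$\ell_2$. So the route through $\ell_2$ does not go through when $p<2$; there one must argue differently, using either the complementation hypothesis or the known structure of $\ell_p$-embeddings into $L_p$ for $p<2$.
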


Let us start with a factorization property of strictly singular operators on~$L_p$.
\begin{theorem}\label{t:factorizationSS}
Let $p<2$ and $T\in\mathcal S(L_p)$. There are a Banach lattice $X_T\subseteq L_p$ such that the unit ball of $X_T$ is
uniformly $p$-integrable in $L_p$, and operators $R\colon L_p\rightarrow\ell_p$, $S\colon \ell_p\rightarrow X_T$
making the following diagram commutative
$$
\xymatrix{L_p\ar_{R}[d]\ar[rr]^{T}&&L_p\\
\ell_p\ar[rr]^{S}&&X_T\ar_{j}[u] }
$$
where $j\colon X_T\hookrightarrow L_p$ denotes the formal inclusion.
\end{theorem}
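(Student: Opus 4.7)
My plan is to combine Johnson's factorization of strictly singular operators on $L_p$ through $\ell_p$ from \cite{J} with the Davis-Figiel-Johnson-Pe{\l}czy\'nski interpolation construction of \cite{DFJP}, applied not to the image of $T$ itself but to a suitably solidified version of it, so as to produce a Banach \emph{sublattice} of $L_p$ with uniformly $p$-integrable unit ball.

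The first step is to invoke Johnson's theorem to write $T=A\circ R$ with $R\colon L_p\to\ell_p$ and $A\colon\ell_p\to L_p$, where the factorization can be arranged so that $A$ itself is strictly singular. I then set $W=A(B_{\ell_p})$ and argue that $W$ is uniformly $p$-integrable: otherwise, by Lemma~\ref{l:uniformly p-integrable}, there would exist a sequence $(Ay_n)$ in $W$ equivalent to the unit vector basis of $\ell_p$, and a standard Bessaga-Pe{\l}czy\'nski argument in $\ell_p$ would then produce a subsequence of $(y_n)$ equivalent to the unit vector basis of $\ell_p$ on whose closed linear span $A$ is an isomorphism, contradicting strict singularity. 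Replacing $W$ by its closed convex solid hull $\widetilde W\subseteq L_p$ preserves uniform $p$-integrability, as one sees directly from characterisation~(a) of Lemma~\ref{l:uniformly p-integrable}.

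I then carry out the DFJP construction for $\widetilde W$: letting $U_n=2^n\widetilde W+2^{-n}B_{L_p}$ with Minkowski functional $\|\cdot\|_n$, set
$$
X_T=\Big\{x\in L_p:\,\sum_{n\geq 1}\|x\|_n^2<\infty\Big\},\qquad \|x\|_{X_T}=\Big(\sum_{n\geq 1}\|x\|_n^2\Big)^{1/2}.
$$
The standard DFJP analysis yields a Banach space $(X_T,\|\cdot\|_{X_T})$ continuously contained in $L_p$ with $\widetilde W$ contained in a fixed multiple of $B_{X_T}$. The crucial point for the lattice structure is that $L_p$ has the Riesz decomposition property, so the Minkowski sum of two solid convex symmetric sets is solid; hence each $U_n$ is solid, $\|\cdot\|_n$ is a lattice seminorm, and $X_T$ is a Banach sublattice of $L_p$. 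The operator $A$ then corestricts to $S\colon\ell_p\to X_T$ with $A=j\circ S$, giving the required factorization $T=j\circ S\circ R$.

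A routine argument finally shows that $B_{X_T}$ is uniformly $p$-integrable: any $x\in B_{X_T}$ satisfies $\|x\|_n\leq 1$ for every $n$ and thus decomposes as $x=2^n w_n+2^{-n}b_n$ with $w_n\in\widetilde W$ and $b_n\in B_{L_p}$, so uniform $p$-integrability of $\widetilde W$ applied to the first summand (after choosing $n$ large enough) controls $\|x\chi_E\|_p$ uniformly as $\mu(E)\to 0$. The main technical obstacle I anticipate is ensuring that Johnson's factorization $T=AR$ can indeed be chosen with $A$ strictly singular (equivalently, that $A(B_{\ell_p})$ is uniformly $p$-integrable in $L_p$); beyond this point, the argument is a lattice-adapted version of standard DFJP machinery.
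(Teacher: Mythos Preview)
Your overall plan is sound and the DFJP half of it matches the paper's Lemma~\ref{l:factorSS} almost verbatim (solid hull, lattice structure via Riesz decomposition, the $2^n W+2^{-n}B_{L_p}$ estimate for uniform $p$-integrability). The difference is the \emph{order} in which you combine the two ingredients, and the obstacle you flag at the end is exactly the point where your order runs into trouble while the paper's does not.

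You apply Johnson's theorem first, to $T\colon L_p\to L_p$, obtaining $T=AR$, and then hope to run DFJP on $A(B_{\ell_p})$. For this you need $A(B_{\ell_p})$ uniformly $p$-integrable, which (as you correctly argue) would follow if $A\colon\ell_p\to L_p$ were strictly singular. But nothing in Johnson's construction guarantees this: the factor $A$ is obtained by dualising the map $A_1\colon L_{p'}\to Y$ from the Remark after Theorem~\ref{t:factorizationSS}, and $A_1$ is not of the form (bounded)${}\circ T^*$ because the ``blocking'' map $z\mapsto(P_{H_n}z)_n$ into $Y=(\oplus(H_n,|\cdot|_n))_{\ell_{p'}}$ is in general unbounded once the constants $M_n$ grow. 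So the strict singularity of $T$ does not obviously transfer to~$A$, and your proposal leaves a real gap here.

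The paper sidesteps this entirely by reversing the order. It first runs DFJP on the solid convex hull of $T(B_{L_p})$---which is uniformly $p$-integrable by Lemma~\ref{l:ellpsingular}, since $T$ itself is $\ell_p$-singular---producing $X_T$ and the bounded corestriction $\widetilde T\colon L_p\to X_T$. The DFJP machinery also transports the compactness of $T i_{2,p}\colon L_2\to L_p$ (obtained from Lemma~\ref{l:ell2singular} applied to $T^*$, then dualised) to compactness of $\widetilde T i_{2,p}\colon L_2\to X_T$. Only \emph{then} is Johnson's theorem invoked, in its dual form for operators out of $L_p$ into an arbitrary target, to factor $\widetilde T=SR$ through $\ell_p$. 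The point is that $S$ now takes values in $X_T$ by construction, so there is nothing further to check about $S(B_{\ell_p})$: it sits inside $\|S\|\,B_{X_T}$, which is already uniformly $p$-integrable. In short, building $X_T$ from $T$ rather than from the unknown factor $A$ is what makes the argument close.
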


We prepare the proof with the following lemmas.
\begin{lemma}\label{l:ellpsingular}
Let $1<p<2$ and $T\colon L_p\rightarrow L_p$ be $\ell_p$-singular. Then $T(B_{L_p})$ is a uniformly $p$-integrable set.
\end{lemma}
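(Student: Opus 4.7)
My plan is to argue by contradiction using Lemma~\ref{l:uniformly p-integrable}: if $T(B_{L_p})$ fails to be uniformly $p$-integrable, I would extract an $\ell_p$-basic image sequence $(y_n)=(Tx_n)$ and analyse the pre-image $(x_n)$ in order to produce, in the end, an $\ell_p$-copy on which $T$ is bounded below, contradicting the $\ell_p$-singularity of $T$.

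First I would observe that $T(B_{L_p})$ is bounded, convex, and symmetric, so failure of uniform $p$-integrability, via Lemma~\ref{l:uniformly p-integrable}(b), produces $(x_n)\subseteq B_{L_p}$ for which $y_n:=Tx_n$ is equivalent to the unit vector basis of $\ell_p$ and spans a complemented subspace of $L_p$. Since the $\ell_p$-basis is weakly null and every functional in $L_p^{*}$ restricts to a functional on the closed span of $(y_n)$, the sequence $(y_n)$ is weakly null in $L_p$. Using reflexivity of $L_p$, I would pass to a further subsequence with $x_n\rightharpoonup x$, use weak continuity of $T$ to conclude $Tx=0$, and replace $x_n$ by $x_n-x$. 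Henceforth $(x_n)$ is bounded, weakly null, with $Tx_n=y_n$.

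I would then apply the classical Kadec--Pe{\l}czy\'{n}ski subsequence splitting principle to $(x_n)$, yielding the dichotomy: either (A) after passing to a subsequence, $(x_n)$ is equivalent to the $\ell_p$-basis (via a disjointly supported perturbation whose norms are bounded away from zero), or (B) after passing to a subsequence, $(x_n)$ is uniformly $p$-integrable. Case (A) is immediate: $T$ would then restrict to an isomorphism between two $\ell_p$-copies, contradicting the $\ell_p$-singularity of $T$. The proof therefore reduces to ruling out case (B).

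The delicate case is (B), and it is where the hypothesis $p<2$ enters essentially. There I would invoke the classical fact, valid for $1<p<2$, that a weakly null uniformly $p$-integrable normalised sequence in $L_p$ admits a subsequence dominated by the $\ell_2$-basis, i.e.\ $\|\sum_k a_k x_{n_k}\|_p\lesssim \|(a_k)\|_2$; this can be extracted by truncating each $x_{n_k}\in M_\varepsilon B_{L_\infty}+\varepsilon B_{L_p}$ through Lemma~\ref{l:uniformly p-integrable}(c) and applying Khintchine's inequality on the $L_\infty$-bounded part. Combined with the $\ell_p$-basic lower bound for $(y_n)$, this yields $\|(a_k)\|_p\lesssim \|\sum_k a_k y_{n_k}\|_p\leq \|T\|\,\|\sum_k a_k x_{n_k}\|_p\lesssim \|(a_k)\|_2$, which collapses upon setting $a_k=1$ for $k=1,\dots,N$ and letting $N\to\infty$, since $1/p>1/2$. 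This $\ell_2$-dominance step is the main obstacle; the remainder of the argument is a routine combination of Lemma~\ref{l:uniformly p-integrable}, reflexivity of $L_p$, and the definition of $\ell_p$-singularity.
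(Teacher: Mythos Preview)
Your argument is correct, but it takes a longer route than the paper's. After arranging $(x_n)$ to be weakly null and seminormalised (which you and the paper do in the same way), the paper bypasses the Kadec--Pe{\l}czy\'nski dichotomy entirely: it passes to an unconditional subsequence and uses directly that $L_p$ has type~$p$ for $1<p<2$, giving
\[
\Bigl\|\sum_n a_n x_n\Bigr\|_p
\;\le\; K\int_0^1 \Bigl\|\sum_n a_n r_n(t)\,x_n\Bigr\|_p\,dt
\;\le\; KM\Bigl(\sum_n |a_n|^p\Bigr)^{1/p}.
\]
Combined with the lower $\ell_p$-estimate coming from $\|\sum a_n Tx_n\|\ge C^{-1}\|(a_n)\|_p$, this already shows $(x_n)$ is equivalent to the $\ell_p$-basis and $T$ is bounded below on its span --- the contradiction follows in one stroke, without ever splitting into the ``disjointly supported'' versus ``uniformly $p$-integrable'' cases.

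Your approach trades this single type-$p$ estimate for the Kadec--Pe{\l}czy\'nski splitting plus, in case~(B), an $\ell_2$-domination argument via truncation and Khintchine. That is perfectly valid, and it has the merit of making explicit \emph{why} the uniformly $p$-integrable alternative is incompatible with an $\ell_p$-image (namely, because such a sequence is $\ell_2$-dominated and $p<2$). The paper's proof is shorter and uses one tool; yours is more structural and perhaps better exposes where the hypothesis $p<2$ is genuinely used.
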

\begin{proof}
Set $W_0=T(B_{L_p})$. By Lemma~\ref{l:uniformly p-integrable}, it is enough to see that $W_0$ does not contain any sequence which is equivalent to the $\ell_p$ basis. Suppose the contrary, and let $(x_n)\subseteq B_{L_p}$ be such that, for some constant $C>0$ and
any sequences $(a_n)$ of scalars, we have
$$
\frac1C \left(\sum_n |a_n|^p\right)^{\frac1p}\leq \left\|\sum_n a_n Tx_n\right\|\leq C \left(\sum_n |a_n|^p\right)^{\frac1p}.
$$
Passing to a subsequence, there is no loss of generality in assuming that $(x_n)$ is weakly null, and hence unconditional.
Using unconditionality and the fact that $L_p$ has type $p$ it follows that for some constants $K,M>0$ we have
\begin{align*}
\frac1C \left(\sum_n |a_n|^p\right)^{\frac1p}&\leq \left\|\sum_n a_n Tx_n\right\|\leq\|T\|\left\|\sum_n a_n x_n\right\|\\
&\leq K\int_0^1 \left\|\sum_n a_n r_n(t) x_n\right\|dt \leq KM \left(\sum_n |a_n|^p\right)^{\frac1p}.
\end{align*}
Therefore $T$ is invertible on the span of $(x_n)$, which is equivalent to the unit vector basis of $\ell_p$.
This contradicts the assumption that $T$ is $\ell_p$-singular.
\end{proof}

For $p<q$, let $i_{q,p}:L_q\hookrightarrow L_p$ denote the formal inclusion operator.
\begin{lemma}\label{l:ell2singular}
Let $p>2$ and\/ $T\colon L_p\rightarrow L_p$ be $\ell_2$-singular. Then the operator $i_{p,2}T\colon L_p\rightarrow L_2$ is compact.
\end{lemma}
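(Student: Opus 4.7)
The plan is to argue by contradiction: suppose $i_{p,2}T$ is not compact. Since $L_p$ is reflexive, after passing to a weakly convergent subsequence and subtracting its limit, we may assume there is a weakly null sequence $(x_n)\subseteq B_{L_p}$ and $\delta>0$ with $\|Tx_n\|_2\geq\delta$ for all $n\in\N$. Because $p>2$ implies $\|\cdot\|_2\leq\|\cdot\|_p$ on $L_p([0,1])$, the weakly null sequence $(Tx_n)$ in $L_p$ is in fact semi-normalised, with $\|Tx_n\|_p\in[\delta,\|T\|]$.

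I would now apply the Kadec--Pe\l czy\'nski dichotomy in $L_p$ (valid for $2<p<\infty$) to $(Tx_n)$: after passing to a subsequence, either (i) $(Tx_n)$ is $L_p$-close to a sequence $(u_n)$ of disjointly supported functions of bounded $L_p$-norm and is equivalent to the unit vector basis of $\ell_p$, or (ii) $(Tx_n)$ is uniformly $p$-integrable and equivalent in $L_p$-norm to the unit vector basis of $\ell_2$.

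Case (i) is ruled out directly: disjointness of $\mathrm{supp}\,u_n\subseteq[0,1]$ forces $\mu(\mathrm{supp}\,u_n)\to 0$, and H\"older's inequality then yields
\[
\|u_n\|_2\leq\|u_n\|_p\,\mu(\mathrm{supp}\,u_n)^{1/2-1/p}\longrightarrow 0,
\]
so $\|Tx_n\|_2\to 0$, contradicting $\|Tx_n\|_2\geq\delta$. In case (ii), I would invoke Kadec--Pe\l czy\'nski once more, now applied to $(x_n)$. If the subsequence obtained is equivalent to the $\ell_p$-basis then $\|\sum a_nx_n\|_p\approx(\sum|a_n|^p)^{1/p}$; combined with the lower bound $\|T\sum a_nx_n\|_p\approx(\sum|a_n|^2)^{1/2}$ from case (ii) and the boundedness of $T$, this gives $(\sum|a_n|^2)^{1/2}\lesssim(\sum|a_n|^p)^{1/p}$, which fails for $p>2$ (take $a_n=1$ for $n\leq N$). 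If instead $(x_n)$ is equivalent to the $\ell_2$-basis in $L_p$, then both $\|\sum a_nx_n\|_p$ and $\|T\sum a_nx_n\|_p$ are comparable to $(\sum|a_n|^2)^{1/2}$, so $T$ is an isomorphism on the $\ell_2$-subspace $\overline{\mathrm{span}}\{x_n\}$, contradicting the $\ell_2$-singularity of $T$.

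The main technical point I anticipate is the correct double invocation of the Kadec--Pe\l czy\'nski dichotomy, together with the standard gliding-hump disjointification needed to produce $(u_n)$ in case (i). Once these are granted, the contradiction is obtained from the clean case analysis above, which exploits in a crucial way the failure of the continuous inclusion of $\ell_p$ into $\ell_2$ for $p>2$.
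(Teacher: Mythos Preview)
Your argument is correct and follows essentially the same route as the paper's proof: reduce to a weakly null semi-normalised sequence $(x_n)$ with $\|Tx_n\|_2\geq\delta$, then use the Kadec--Pe\l czy\'nski dichotomy on $(x_n)$ to derive a contradiction with $\ell_2$-singularity in one case and with $p>2$ in the other. The only minor difference is in how you establish that $(Tx_n)$ is $\ell_2$-equivalent in $L_p$: you apply Kadec--Pe\l czy\'nski to $(Tx_n)$ and eliminate the $\ell_p$ alternative via the support-measure H\"older argument, whereas the paper simply observes that $(i_{p,2}Tx_n)$, being a semi-normalised weakly null sequence in the Hilbert space $L_2$, has a subsequence equivalent to the $\ell_2$ basis. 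The paper's shortcut avoids your Case~(i) entirely, but your disjoint-support argument is perfectly valid and yields the same conclusion.
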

\begin{proof}
Suppose $(x_n)\subseteq L_{p}$ is a bounded sequence which, without loss of generality, can be assumed to be weakly null and normalised, and that $\liminf_n\|i_{p,2}Tx_n\|_2>0$. Hence, we can extract a subsequence such that $(i_{p,2}Tx_n)$ is equivalent to the unit basis of $\ell_2$. Since $p>2$, by \cite{KP} it follows that $(x_n)$ is either equivalent to the unit basis of $\ell_{p}$ or $\ell_2$. Suppose that $(x_n)$ is equivalent to the unit basis of $\ell_2$, then for arbitrary scalars $(a_n)$ we have
$$
\Big\|\sum_n a_n x_n\Big\|_p\approx\Big(\sum_n|a_n|^2\Big)^{\frac12}\approx\Big\|\sum_n a_n i_{p,2}Tx_n\Big\|_2\leq \Big\|\sum_n a_n Tx_n\Big\|_{p}.
$$
Thus, $T$ is an isomorphism on the subspace generated by $(x_n)$ which is a contradiction with $T$ being $\ell_2$-singular. Therefore, $(x_n)$ must be equivalent to the unit vector basis of $\ell_{p}$, but this would  imply that for every $k\in\mathbb N$
$$
k^{\frac12}\lesssim \Big\|\sum_{n=1}^k i_{p,2}Tx_n\Big\|_2\lesssim \Big\|\sum_{n=1}^k x_n\Big\|_{p}\lesssim k^{\frac1{p}}.
$$
This is impossible for large $k$ as $p>2$, so we conclude that $\liminf_n\|i_{p,2}Tx_n\|_2=0$ and $i_{p,2}T$ is compact, as claimed.
\end{proof}

Next result is based on a well-known interpolation construction from \cite{DFJP} (see also \cite[Section 5.2]{AB}).
\begin{lemma}\label{l:factorSS}
Let $1<p<2$ and $T\in\mathcal{S}(L_p)$. There exist a Banach lattice $X_T$ with the following properties:
\begin{enumerate}[\rm(i)]
\item $j\colon X_T\hookrightarrow L_p$ is bounded.
\item $\widetilde T:L_p\rightarrow X_T$ given by $\widetilde Tx=Tx$ is bounded.
\item The unit ball of $X_T$ is uniformly $p$-integrable in $L_p$.
\item $\widetilde T$ is strictly singular.
\item The composition $\widetilde T i_{2,p}:L_2\rightarrow X_T$ is compact.
\end{enumerate}
\end{lemma}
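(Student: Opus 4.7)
The approach is to build $X_T$ via the Davis--Figiel--Johnson--Pe\l czy\'nski interpolation construction applied to the image $T(B_{L_p})$. Since $T\in\mathcal S(L_p)$ with $p<2$, Lemma~\ref{l:ellpsingular} ensures that $W_0:=T(B_{L_p})$ is uniformly $p$-integrable, and its closed absolutely convex solid hull $W$ retains this property via Lemma~\ref{l:uniformly p-integrable}. For $n\in\mathbb{N}$ set $U_n:=2^n W+2^{-n}B_{L_p}$ with Minkowski functional $\|\cdot\|_n$, and define
$$\|y\|_{X_T}^2:=\sum_{n\in\mathbb{N}}\|y\|_n^2,\qquad X_T:=\{y\in L_p:\|y\|_{X_T}<\infty\}.$$
Because $W$ is solid, $X_T$ inherits the order of $L_p$ and becomes a Banach lattice.

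Properties (i) and (ii) are built in: $U_n\subseteq(2^n\|W\|_{L_p}+2^{-n})B_{L_p}$ delivers the continuous inclusion $j\colon X_T\hookrightarrow L_p$, while $Tx\in W$ for $x\in B_{L_p}$ forces $\|Tx\|_n\leq 2^{-n}$ and hence $\|\widetilde T\|\leq 1/\sqrt{3}$. For (iii), any $y\in B_{X_T}$ lies in $U_n$ for every $n$, so splits as $2^n w+2^{-n}b$ with $w\in W$, $b\in B_{L_p}$; combining this with Lemma~\ref{l:uniformly p-integrable}(c) applied to $W$ with $\varepsilon=2^{-2n}$, and choosing first $n$ large and then $\mu(A)$ small, yields the uniform $p$-integrability of $B_{X_T}$.

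For (iv), suppose $\widetilde T|_M$ is an isomorphism with lower bound $c>0$ on some infinite-dimensional $M\subseteq L_p$. By the characterisation recalled in Section~\ref{s:preliminaries}, there is an infinite-dimensional $N\subseteq M$ on which $T$ is compact; picking a normalised weakly null basic sequence $(z_k)\subseteq N$ gives $\|Tz_k\|_{L_p}\to 0$. The crucial point is that $\widetilde T z_k\in W$, which in the DFJP construction forces the uniform geometric tail estimate $\|\widetilde T z_k\|_n\leq 2^{-n}$; meanwhile, for each fixed $n$, the bound $\|y\|_n\leq 2^n\|y\|_{L_p}$ combined with $\|Tz_k\|_{L_p}\to 0$ gives $\|\widetilde T z_k\|_n\to 0$ as $k\to\infty$. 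Splitting $\sum_n\|\widetilde T z_k\|_n^2$ at $n=N$, letting $k\to\infty$ and then $N\to\infty$, one obtains $\|\widetilde T z_k\|_{X_T}\to 0$, contradicting the lower bound~$c$. For (v), Lemma~\ref{l:ell2singular} applied to $T^*\in\mathcal S(L_{p'})$ (with $p'>2$), together with Schauder's theorem and the identification $i_{p',2}^*=i_{2,p}$, yields that $T\circ i_{2,p}\colon L_2\to L_p$ is compact. Its image $K$ lies in $\|i_{2,p}\|\,W$, hence satisfies $\|y\|_n\leq\|i_{2,p}\|\,2^{-n}$ uniformly on $K$; a standard Arzel\`a--Ascoli-type splitting, with tail $\sum_{n>N}\|y_1-y_2\|_n^2=O(4^{-N})$ uniformly and initial block $\sum_{n\leq N}\|y_1-y_2\|_n^2\lesssim 4^N\|y_1-y_2\|_{L_p}^2$, then upgrades relative compactness of $K$ in $L_p$ to relative compactness in~$X_T$.

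The main obstacle is step (iv): strict singularity in the ambient $L_p$-norm does not in general descend to the finer $X_T$-norm, since a sequence in $B_{X_T}$ can have $X_T$-norm bounded below while its $L_p$-norm tends to zero. What rescues the argument is that the relevant vectors $\widetilde T z_k$ lie not merely in $B_{X_T}$ but inside the ``smaller'' body $W$ itself, where the DFJP construction guarantees the geometric decay $\|\cdot\|_n\leq 2^{-n}$ needed to rule out this escape-of-mass behaviour.
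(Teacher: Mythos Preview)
Your proof is correct and follows the same DFJP construction as the paper, with the same choice of $W$ and the same use of Lemmas~\ref{l:uniformly p-integrable}, \ref{l:ellpsingular} and~\ref{l:ell2singular}. The only difference is that for (iv) and (v) you supply explicit tail-splitting arguments (exploiting that the relevant vectors lie in~$W$, hence satisfy $\|\cdot\|_n\le C\,2^{-n}$) where the paper simply invokes \cite[Theorem~5.40]{AB}; your arguments are essentially the content of that reference in the present setting.
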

\begin{proof}
Let $W$ denote the solid convex hull of $T(B_{L_p})$.
Clearly, $W$ is convex, solid and uniformly $p$-integrable in $L_p$, by Lemma~\ref{l:ellpsingular}.
For each $n\in\mathbb N$, let $U_n=2^nW+2^{-n} B_{L_p}$, and denote
$$
\|x\|_n=\inf\{\lambda>0:x\in\lambda U_n\}.
$$
Let us define
$$
X_T=\Big\{x\in L_p:\|x\|_{X_T}=\Big(\sum_{n\in\mathbb N} \|x\|_n^2\Big)^{\frac12}<\infty\Big\}.
$$
By \cite[Theorem 5.41]{AB}, it follows that $X_T$ is a Banach lattice, that the operator
$\widetilde T\colon L_p\rightarrow X_T$ given by $\widetilde T(x)=T(x)$ is bounded, and the inclusion
$j\colon X_T\rightarrow L_p$ is also bounded (see also \cite[Theorem 5.37]{AB}). Thus, we have (i) and~(ii).

For the proof of (iii), let $\varepsilon>0$ and let $n\in\mathbb N$ such that $2^{-n+1}<\varepsilon\leq 2^{-n+2}$.
Since $W$ is uniformly $p$-integrable, by Lemma~\ref{l:uniformly p-integrable} there is $M_{\varepsilon/2}>0$ such that
$W\subseteq M_{\varepsilon/2} B_{L_\infty}+\frac{\varepsilon}{2} B_{L_p}$.
Now let $x\in X_T$ such that $\|x\|_{X_T}\leq1$. In particular, we have that $\|x\|_n\leq1$, or in other words,
\begin{multline*}
x\in 2^nW+2^{-n} B_{L_p}\subseteq 2^n M_{\varepsilon/2}B_{L_\infty}+2^{-n} B_{L_p}+\frac{\varepsilon}{2} B_{L_p}\\
  \subseteq \frac{4 M_{\varepsilon/2}}{\varepsilon}B_{L_\infty}+\varepsilon B_{L_p}.
\end{multline*}
Hence, $B_{X_T}\subseteq \frac{4 M_{\varepsilon/2}}{\varepsilon}B_{L_\infty}+\varepsilon B_{L_p}$,
and since this holds for every $\varepsilon>0$, it follows by Lemma \ref{l:uniformly p-integrable} that $B_{X_T}$ is uniformly $p$-integrable in~$L_p$.

Recall an operator $S\colon E\rightarrow F$ is strictly singular if and only if for every infinite-dimensional subspace $X\subseteq E$, there is a further infinite-dimensional subspace $Y\subseteq X$ such that the restriction $S|_{Y}$ is compact. Thus, (iv) follows from \cite[Theorem 5.40]{AB}.

Finally, for the proof of (v), note first that $T i_{2,p}\colon L_2\rightarrow L_p$ is compact. Indeed, since $T\in\mathcal S(L_p)$,
we have that $T^*\in\mathcal S(L_{p'})$ and by Lemma~\ref{l:ell2singular}, it follows that
$i_{p',2}T^*\colon L_{p'}\rightarrow L_2$ is compact. By duality, we have that
$T i_{2,p}\colon L_2\rightarrow L_p$ is compact and the conclusion follows from \cite[Theorem 5.40]{AB}.
\end{proof}

\begin{proof}[Proof of Theorem \ref{t:factorizationSS}]
By Lemma~\ref{l:factorSS} we have the factorization
$$
\xymatrix{L_p\ar_{\widetilde{T}}[dr]\ar[rr]^{T}&&L_p\\
&X_T\ar[ru]_{j}&}
$$
where $j$ maps the unit ball into a uniformly $p$-integrable set.

Moreover, the composition $\widetilde T i_{2,p}\colon L_2\rightarrow X_T$ is compact.
From this fact and duality, using \cite{J} it follows that $\widetilde T$ actually factors through $\ell_p$:
$$
\xymatrix{L_p\ar_{R}[dr]\ar[rr]^{\widetilde T}&&X_T\\
&\ell_p\ar[ru]_{S}&}
$$
Joining the two diagrams we get the result.
\end{proof}

\begin{remark}
For the sake of completeness, let us briefly recall the factorization construction given in~\cite{J}
for an operator $A\colon L_p\rightarrow L_p$ with $p>2$:
Let $(h_n)_{n\in\mathbb N}$ denote the Haar basis for $L_p$. For increasing sequences $(k_n)_{n\in\mathbb N}$,
$(M_n)_{n\in\mathbb N}$ in $\mathbb N$, take $H_n=[h_i]_{i=k_n}^{k_{n+1}-1}$
and $|f|_n=\max\{M_n\|f\|_{L_2},\|f\|_{L_p}\}$. It can be checked that $Y=(\oplus_{n\in\mathbb N} (H_n,|\cdot|_n))_{\ell_p}$ is isomorphic to~$\ell_p$.
Moreover, for $x\in L_p$, set $A_1(x)=(y_n)_{n\in\mathbb N}\in Y$, where $y_n\in H_n$ are such that
$A(x)=\sum_{n\in \mathbb N} y_n$, and for $(x_n)_{n\in\mathbb N}\in Y$ set $A_2((x_n)_{n\in\mathbb N})=\sum_{n\in \mathbb N} x_n$.
The proof given in~\cite{J} yields that if $i_{p,2}A$ is compact, then there exist increasing sequences
$(k_n)_{n\in\mathbb N}$, $(M_n)_{n\in\mathbb N}$ such that the corresponding $A_1$ and $A_2$ are   bounded, and clearly $A=A_2 A_1$.
\end{remark}
\begin{remark}
The Banach lattice $X_T$ constructed in the lemma (and thus in Theorem~\ref{t:factorizationSS}) can actually be taken to be a rearrangement invariant space. This can be done by taking $W$ closed under measure rearrangements of the underlying space $[0,1]$.
We refer the reader to \cite{LT2} for background on rearrangement invariant spaces. Similarly, for $q>p$ one can also
achieve that $L_q\subseteq X_T$ by enlarging $W$ in the above proof.
\end{remark}

Recall that an operator $T\colon E\rightarrow Y$ on a Banach lattice $E$ is called M-weakly compact if $\|Tx_n\|\rightarrow 0$ for every sequence of pairwise disjoint normalised vectors $(x_n)\subseteq E$. By duality, the following is an immediate consequence of Theorem~\ref{t:factorizationSS}.
\begin{corollary}\label{c:teop>2}
Let $p>2$ and $T\in\mathcal S(L_p)$. There are a rearrangement invariant space $X_T$ and operators
$T_1\colon L_p\rightarrow X_T$, $T_2\colon X_T\rightarrow \ell_p$ and $T_3\colon \ell_p\rightarrow L_p$ making
the following diagram commutative
$$
\xymatrix{L_p\ar_{T_1}[d]\ar[rr]^{T}&&L_p\\
X_T\ar[rr]^{T_2}&&\ell_p\ar_{T_3}[u] }
$$
with $T_1$ M-weakly compact.
\end{corollary}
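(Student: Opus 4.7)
The plan is to follow the ``by duality'' hint: apply Theorem~\ref{t:factorizationSS} to $T^*$ and then take adjoints. Since $p>2$ gives $p'<2$, and $T\in\mathcal S(L_p)$ forces $T^*\in\mathcal S(L_{p'})$ by the Weis theorem recalled at the start of the section, Theorem~\ref{t:factorizationSS} applied to $T^*$---combined with the preceding remark allowing a rearrangement invariant choice---yields a rearrangement invariant Banach lattice $X_{T^*}\subseteq L_{p'}$ whose unit ball is uniformly $p'$-integrable, together with operators $R\colon L_{p'}\to\ell_{p'}$ and $S\colon\ell_{p'}\to X_{T^*}$ with $T^*=jSR$, where $j\colon X_{T^*}\hookrightarrow L_{p'}$ is the formal inclusion.

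Setting $X_T=(X_{T^*})^*$, $T_1=j^*$, $T_2=S^*$, and $T_3=R^*$, and using reflexivity of $L_p$ and $\ell_p$ to identify $(L_{p'})^*=L_p$ and $(\ell_{p'})^*=\ell_p$, the identity $T=T^{**}|_{L_p}=R^*S^*j^*=T_3T_2T_1$ produces the required commutative diagram, with $X_T$ inheriting rearrangement invariance from $X_{T^*}$.

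The only step requiring real work is verifying that $T_1=j^*$ is M-weakly compact. Let $(f_n)\subset L_p$ be a normalized pairwise disjoint sequence and let $A_n=\{f_n\neq 0\}$, so that $\mu(A_n)\to 0$. Since the duality pairing between $X_{T^*}$ and $X_T=(X_{T^*})^*$ extends the $L_{p'}$--$L_p$ pairing, for every $g\in B_{X_{T^*}}$,
$$
|\langle T_1f_n,g\rangle|=\left|\int f_n g\,d\mu\right|=\left|\int f_n\,(g\chi_{A_n})\,d\mu\right|\le\|f_n\|_p\,\|g\chi_{A_n}\|_{p'}.
$$
Taking the supremum over $g\in B_{X_{T^*}}$ and invoking uniform $p'$-integrability of the unit ball of $X_{T^*}$ gives $\|T_1f_n\|_{X_T}\to 0$. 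I do not expect a serious obstacle here; the only minor bookkeeping is checking that the action of $j^*(f)\in (X_{T^*})^*$ on $g\in X_{T^*}$ really is $\int fg\,d\mu$, which is immediate from the definition of the adjoint once $X_{T^*}$ is viewed as a subspace of $L_{p'}$.
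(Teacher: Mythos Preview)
Your proof is correct and follows essentially the same route as the paper: dualize, apply Theorem~\ref{t:factorizationSS} to $T^*\in\mathcal S(L_{p'})$, and take adjoints to obtain $T_1=j^*$, $T_2=S^*$, $T_3=R^*$. The only difference is that where the paper invokes \cite[Theorem~5.64]{AB} to deduce M-weak compactness of $j^*$ from the uniform $p'$-integrability of $j(B_{X_{T^*}})$, you verify this directly from the definition via the estimate $\|j^*f_n\|_{X_T}\le\|f_n\|_p\sup_{g\in B_{X_{T^*}}}\|g\chi_{A_n}\|_{p'}$; your argument is sound (the disjoint supports $A_n$ lie in $[0,1]$, so $\mu(A_n)\to0$) and simply makes the cited step self-contained.
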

\begin{proof}
If $T\in\mathcal S(L_p)$, then $T^*\in\mathcal S(L_{p'})$ \cite{W}. Now since $p'<2$, Theorem~\ref{t:factorizationSS} gives us the
factorization $T^*=j S R$. Let $T_1=j^*$, $T_2=S^*$ and $T_3=R^*$. By~\cite[Theorem 5.64]{AB}, and the fact that $j(B_{L_{p'}})$ is uniformly $p$-integrable, it follows that $T_1=j^*$ is M-weakly compact.
\end{proof}
\begin{theorem}\label{t:factorKellp}
Let $A,B\in \mathcal S(L_p)$. Then $L_AR_B$ factors through $\mathcal K(\ell_p)$.
\end{theorem}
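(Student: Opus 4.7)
The plan is to produce bounded operators $\Phi\colon\mathcal L(L_p)\to\mathcal K(\ell_p)$ and $\alpha\colon\mathcal K(\ell_p)\to\mathcal L(L_p)$ satisfying $L_A R_B=\alpha\circ\Phi$, treating the cases $p<2$ and $p>2$ in turn.

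For $p<2$, Theorem~\ref{t:factorizationSS} gives factorizations $A=j_A S_A R_A$ and $B=j_B S_B R_B$, with $R_A,R_B\colon L_p\to\ell_p$, $S_A,S_B\colon\ell_p\to X$ (for the corresponding DFJP lattice $X$), and the inclusions $j_A,j_B$ having uniformly $p$-integrable unit ball. Set
$$\Phi(T)=R_A T j_B S_B,\qquad \alpha(U)=j_A S_A U R_B.$$
Then $\alpha\circ\Phi(T)=j_A S_A R_A T j_B S_B R_B = ATB$, so the crux is to show that $\Phi(T)\in\mathcal K(\ell_p)$ for every $T\in\mathcal L(L_p)$. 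Since $j_B S_B(B_{\ell_p})$ is uniformly $p$-integrable, Lemma~\ref{l:uniformly p-integrable}(c) yields, for each $\varepsilon>0$, some $M_\varepsilon>0$ with $j_B S_B(B_{\ell_p})\subseteq M_\varepsilon B_{L_\infty}+\varepsilon B_{L_p}$. On a probability space the inclusions $L_\infty\hookrightarrow L_2\hookrightarrow L_p$ are contractions, hence $B_{L_\infty}\subseteq i_{2,p}(B_{L_2})$. The key structural ingredient is that $T i_{2,p}\colon L_2\to L_p$ is compact: as $T\in\mathcal S(L_p)$ forces $T^*\in\mathcal S(L_{p'})$ with $p'>2$, Lemma~\ref{l:ell2singular} gives compactness of $i_{p',2}T^*$, and reflexivity yields $T i_{2,p}=(i_{p',2}T^*)^*$. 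Consequently,
$$R_A T j_B S_B(B_{\ell_p})\subseteq M_\varepsilon R_A\,\overline{T i_{2,p}(B_{L_2})}+\varepsilon\|T\|\,\|R_A\|\,B_{\ell_p},$$
where the first set is compact in $\ell_p$ and the second arbitrarily small; thus $R_A T j_B S_B(B_{\ell_p})$ is totally bounded and $\Phi(T)\in\mathcal K(\ell_p)$.

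For $p>2$, Corollary~\ref{c:teop>2} gives $A=A_3 A_2 A_1$ and $B=B_3 B_2 B_1$, and I take
$$\Phi(T)=A_2 A_1 T B_3,\qquad \alpha(U)=A_3 U B_2 B_1,$$
so $\alpha\circ\Phi=L_A R_B$. The proof of Corollary~\ref{c:teop>2} exhibits the $A_i,B_i$ as adjoints of the Theorem~\ref{t:factorizationSS} factors of $A^*,B^*\in\mathcal S(L_{p'})$. Because the DFJP lattices $X_{A^*},X_{B^*}$ are built from weakly compact operators, they are reflexive, so taking adjoints yields
$$\Phi(T)^*=R_{B^*}T^* j_{A^*}S_{A^*}\colon\ell_{p'}\to\ell_{p'},$$
which is precisely the operator analysed in the first case applied to $A^*,B^*,T^*$ in $L_{p'}$. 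Hence $\Phi(T)^*\in\mathcal K(\ell_{p'})$, and Schauder's theorem gives $\Phi(T)\in\mathcal K(\ell_p)$.

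The main obstacle I anticipate is the compactness verification in the $p<2$ case: the decomposition from Lemma~\ref{l:uniformly p-integrable}(c) is only a set-theoretic splitting (not a linear splitting of operators), so it must be fed through the total-boundedness criterion together with the compactness of $T i_{2,p}$ in order to force $\Phi(T)$ into $\mathcal K(\ell_p)$; once this is in hand, the $p>2$ case follows by duality, and the remaining verifications that $\Phi$ and $\alpha$ are bounded with $\alpha\circ\Phi=L_AR_B$ are immediate from the factorizations.
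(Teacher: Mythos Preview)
There is a genuine gap in the $p<2$ case, and since the $p>2$ case is reduced to it by duality, the whole argument collapses. You need $\Phi(T)=R_A T j_B S_B$ to be compact for \emph{every} $T\in\mathcal L(L_p)$, but in the sentence ``as $T\in\mathcal S(L_p)$ forces $T^*\in\mathcal S(L_{p'})$'' you are treating the free variable $T$ as if it were one of the strictly singular operators $A,B$. It is not: for $T=I_{L_p}$ the map $T i_{2,p}=i_{2,p}$ is certainly not compact (the Rademacher functions give a bounded sequence in $L_2$ with no $L_p$-convergent subsequence), so the displayed inclusion $R_A T j_B S_B(B_{\ell_p})\subseteq M_\varepsilon R_A\,\overline{T i_{2,p}(B_{L_2})}+\varepsilon\|T\|\|R_A\|B_{\ell_p}$ no longer has a compact first summand. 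The uniform $p$-integrability you invoke sits on the \emph{input} side of the arbitrary $T$ and is destroyed by it; the factorization only tells you that $S_A R_A\, i_{2,p}=\widetilde A\, i_{2,p}$ is compact (Lemma~\ref{l:factorSS}(v)), not that $R_A\, i_{2,p}$ is, so you cannot simply shift the compactness to the $R_A$ side either.

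The paper's proof places the structural information on the \emph{output} side of $T$. Working in the dual setting $p>2$, it shows that $A_1 T B_3\colon\ell_p\to X_T$ is compact for every $T$: given a weakly null normalised $(x_n)\subseteq\ell_p$, the Kadec--Pe{\l}czy\'nski dichotomy forces a subsequence of $(TB_3x_n)\subseteq L_p$ to be either equivalent to the $\ell_2$ basis (impossible by Pitt, since the domain is $\ell_p$ with $p>2$) or close to a disjoint sequence, which is then killed by the M-weak compactness of $A_1$. Note also that the paper first arranges, via a band decomposition trick, for $A$ and $B$ to factor through the \emph{same} intermediate lattice $X_T$; your setup with possibly different $X_A,X_B$ avoids that issue at the level of defining $\Phi$ and $\alpha$, but you still need an argument of the Kadec--Pe{\l}czy\'nski type (using the $A$-side, not $T$) to land in $\mathcal K(\ell_p)$.
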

\begin{proof}
By passing to adjoints we can assume without loss of generality that $p>2$. First, let us show that $A$ and $B$ have factorization diagrams through the same spaces. To this end, let $X_1$ be the subspace of $L_p$ consisting of functions supported on $[0,1/2]$ and $X_2$ those supported on $[1/2,1]$. Clearly, we have the (band) decomposition $L_p=X_1\oplus X_2$ and lattice isomorphisms $\varphi_i\colon X_i\rightarrow L_p$ for $i=1,2$.
Let $j_i\colon X_i\rightarrow L_p$ denote the inclusion operators, and $P_i\colon L_p\rightarrow X_i$ the corresponding (band) projections for $i=1,2$.
Let us consider the operator
$$
T=j_1\varphi_1^{-1}A\varphi_1P_1+j_2\varphi_2^{-1}B\varphi_2P_2.
$$
Clearly, $T\in\mathcal S (L_p)$, so Corollary~\ref{c:teop>2} yields the factorization
$$
\xymatrix{L_p\ar_{T_1}[d]\ar[rr]^{T}&&L_p\\
X_T\ar[rr]^{T_2}&&\ell_p\ar_{T_3}[u] }
$$
with $T_1$ M-weakly compact. It follows that we can factor $A$ and $B$ as follows
$$
\xymatrix{L_p\ar_{A_1}[d]\ar[rr]^{A}&&L_p&&L_p\ar_{B_1}[d]\ar[rr]^{B}&&L_p\\
X_T\ar[rr]^{A_2}&&\ell_{p}\ar_{A_3}[u] && X_T\ar[rr]^{B_2}&&\ell_{p}\ar_{B_3}[u] }
$$
where $A_1$ and $B_1$ are M-weakly compact.

Therefore, we can write $L_AR_B=(L_{A_3}R_{B_1})\circ (L_{A_2}R_{B_2})\circ (L_{A_1}R_{B_3})$. We claim that
$$
L_{A_1}R_{B_3}(\mathcal L(L_p))\subseteq \mathcal K(\ell_p,X_T).
$$
Indeed, assuming the contrary, let $T\in\mathcal L(L_p)$ and take a norm bounded sequence $(x_n)\subseteq \ell_p$ such that $(A_1TB_3x_n)$ has no convergent subsequence. Passing to a further subsequence we can assume that $(x_n)$ is weakly null and equivalent to the unit vector basis of $\ell_p$. By \cite{KP}, it follows that up to a further subsequence $(TB_3 x_n)\subseteq L_p$ is either equivalent to the unit vector basis of $\ell_2$ or $\ell_p$. In the former case, as $p>2$, it would follow that $\|TB_3x_n\|\rightarrow 0$ by Pitt's theorem. In the latter, we actually have that $\|TB_3x_n-y_n\|\rightarrow 0$ for a certain pairwise disjoint sequence $(y_n)\subseteq L_p$. Now, since $A_1$ is M-weakly compact, it follows that $\|A_1TB_3x_n\|\rightarrow0$. This is a contradiction, hence $L_{A_1}R_{B_3}(\mathcal L(L_p))\subseteq \mathcal K(\ell_p,X_T)$, as claimed.

In particular, we have the following factorization:
$$
\xymatrix{\mathcal L(L_p)\ar_{L_{A_1}R_{B_3}}[d]\ar[rr]^{L_{A} R_{B}}&&\mathcal K(L_p)\\
\mathcal K(\ell_p,X_T)\ar^{L_{A_2}R_{B_2}}[rr]\ar_{L_{A_2}}[rd]&&\mathcal K(X_T,\ell_p)\ar_{L_{A_3}R_{B_1}}[u]\\
&\mathcal K(\ell_p)\ar_{R_{B_2}}[ru]&}
$$

\vspace{-.4cm}
\end{proof}
\begin{remark}
Let us briefly recall part of the strategy of proof of  \cite[Theorem 2.9]{LST}: after some preliminary block-diagonalization argument, the authors show that if $A,B\in\mathcal S(L_p)$ ($p>2$) and $L_AR_B$ were not strictly singular, then it must be invertible on a subspace isomorphic to $\ell_s$, with $s=\frac{2p}{p-2}$ (see Claim 1 in the proof of  \cite[Theorem 2.9]{LST}); from there, some more work is necessary to reach a contradiction with the strict singularity of $B$. Alternatively, Theorem \ref{t:factorKellp} together with Lemma~\ref{l:p-Holub}, yield that if $A,B\in \mathcal S(L_p)$, and $L_AR_B$ is invertible in some subspace $X\subseteq \mathcal L(L_p)$, then $X$ should contain a subspace isomorphic to $\ell_p$, $\ell_{p'}$ or $c_0$ (although this last option is impossible because of the weak compactness of $L_AR_B$). Hence,  the case $X=\ell_s$ as above can also be ruled out by this approach.
\end{remark}
\begin{remark}
We do not know whether every $T\in\mathcal S(L_p)$ is approximately $(\ell_r,\ell_s)$-factorizable for some $r<s$. In fact, we do not know whether for an operator $T\in\mathcal S(L_p)$, and every $\varepsilon>0$, there exist $r(\varepsilon)<s(\varepsilon)$, two sequences of finite dimensional spaces $(X_n)_{n\in\mathbb N}$, $(Y_n)_{n\in\mathbb N}$ and operators
$$
\begin{array}{cl}
T_1^\varepsilon:&L_p\longrightarrow (\oplus X_n)_{\ell_{r(\varepsilon)}}\\
T_2^\varepsilon:&(\oplus X_n)_{\ell_{r(\varepsilon)}}\longrightarrow (\oplus Y_n)_{\ell_{s(\varepsilon)}}\\
T_3^\varepsilon:&(\oplus Y_n)_{\ell_{s(\varepsilon)}}\longrightarrow  L_p
\end{array}
$$
such that $\|T- T_3^\varepsilon T_2^\varepsilon T_1^\varepsilon\|\leq \varepsilon$. Keeping in mind the previous comment, by Remark \ref{r:sums} and the argument in the proof of Theorem \ref{t:factorKellp}, such a factorization would yield an alternative direct proof of \cite[Theorem 2.9]{LST}.
\end{remark}

In connection with approximate factorization the following property of strictly singular operators might be useful
(compare with \cite[Proposition~4.1]{HST}). For a measurable set $A\subseteq [0,1]$, let $P_A$ denote the projection onto the band of functions supported on $A$: $P_A x=\chi_A x$.

\begin{proposition}
Let $T\in\mathcal L(L_p)$ be $\ell_p$-singular. When $p<2$, for every $\varepsilon>0$, there is $\delta>0$ such that if $\mu(A)<\delta$, then $\|P_AT\|\leq \varepsilon$. Similarly, when $p\geq2$, for every $\varepsilon>0$, there is $\delta>0$ such that if $\mu(A)<\delta$, then $\|TP_A\|\leq \varepsilon$.
\end{proposition}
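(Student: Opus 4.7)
The statement splits into two parts that require distinct arguments.

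\emph{Case $p<2$.} For any measurable $A\subseteq[0,1]$,
$$\|P_AT\|=\sup_{\|x\|_p\le 1}\|\chi_A Tx\|_p=\sup_{f\in T(B_{L_p})}\|\chi_A f\|_p,$$
and by Lemma~\ref{l:ellpsingular}, $T(B_{L_p})$ is uniformly $p$-integrable, so the right-hand side tends to $0$ as $\mu(A)\to 0$. Nothing further is needed, so the plan here is simply to quote Lemma~\ref{l:ellpsingular}.

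\emph{Case $p\ge 2$.} I would proceed by contradiction: assume there exist $\varepsilon_0>0$ and measurable sets $A_n$ with $\mu(A_n)\to 0$ and $\|TP_{A_n}\|>\varepsilon_0$. Choose $x_n\in B_{L_p}$ nearly witnessing the supremum; then $\|\chi_{A_n}x_n\|_p\ge\varepsilon_0/\|T\|$, and after renormalising I obtain $z_n\in L_p$ supported on $A_n$ with $\|z_n\|_p=1$ and $\|Tz_n\|_p\ge\varepsilon_0$. Since $z_n\to 0$ in measure while remaining $L_p$-bounded, and $p>1$, the sequence $(z_n)$ is weakly null. For $p=2$, $\ell_2$-singularity on the Hilbert space $L_2$ is just compactness, so $T^*$ is compact, $T^*(B_{L_2})$ is relatively compact and hence uniformly integrable, giving $\|P_AT^*\|\to 0$, and duality (with $P_A^*=P_A$) yields $\|TP_A\|\to 0$ and closes that sub-case. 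So assume $p>2$; H\"older's inequality applied on $A_n$ yields $\|z_n\|_2\le\mu(A_n)^{1/2-1/p}\to 0$.

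Next I would invoke the Kadec--Pe{\l}czy\'nski dichotomy (as used in Lemma~\ref{l:ell2singular}): since $(z_n)$ is normalised, weakly null in $L_p$ with $p>2$, and $\|z_n\|_2\to 0$, a subsequence is equivalent to the unit vector basis of $\ell_p$ (the ``almost disjoint'' branch). Applying the dichotomy once more to $(Tz_n/\|Tz_n\|_p)$ yields a further subsequence equivalent either to the $\ell_p$-basis or to the $\ell_2$-basis. In the $\ell_p$-branch, $T$ restricts to an isomorphism between the copies of $\ell_p$ spanned by $(z_{n_k})$ and $(Tz_{n_k})$, contradicting the $\ell_p$-singularity of $T$. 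In the $\ell_2$-branch, testing on $\sum_{k=1}^N z_{n_k}$ and using $\|Tz_{n_k}\|_p\ge\varepsilon_0$ one obtains $N^{1/2}\lesssim\|T\|\,N^{1/p}$, impossible for $p>2$ and $N$ large. Either alternative gives a contradiction. The principal obstacle is the correct setup of the Kadec--Pe{\l}czy\'nski dichotomy for both $(z_n)$ and $(Tz_n)$; after that the $p<2$ versus $p>2$ geometry of $L_p$ does the work.
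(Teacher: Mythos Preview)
Your argument is correct. For $p<2$ you and the paper do the same thing: invoke Lemma~\ref{l:ellpsingular} to get uniform $p$-integrability of $T(B_{L_p})$, which immediately gives $\|P_AT\|\to 0$. (The paper phrases this via the equivalent characterization $T(B_{L_p})\subseteq M_\varepsilon B_{L_\infty}+\tfrac{\varepsilon}{2}B_{L_p}$ from Lemma~\ref{l:uniformly p-integrable}, but this is cosmetic.)

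For $p\ge 2$ the routes diverge. The paper dispatches this case in one line, ``by duality'': since $(TP_A)^*=P_A T^*$ and $p'\le 2$, one applies the first statement to $T^*$ on $L_{p'}$. This is short but tacitly uses that $\ell_p$-singularity of $T$ on $L_p$ passes to $\ell_{p'}$-singularity of $T^*$ on $L_{p'}$, a true fact for $L_p$ spaces that is not entirely trivial (and the case $p=2$ strictly speaking falls outside the $p'<2$ hypothesis, though it is immediate from compactness, as you note). Your approach instead argues directly: build normalised $z_n$ supported on sets of vanishing measure with $\|Tz_n\|\ge\varepsilon_0$, observe $\|z_n\|_2\to 0$ so Kadec--Pe\l czy\'nski forces a subsequence equivalent to the $\ell_p$ basis, then split on whether $(Tz_{n_k})$ is an $\ell_p$- or $\ell_2$-sequence and derive a contradiction either from $\ell_p$-singularity or from the inequality $N^{1/2}\lesssim N^{1/p}$. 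This is longer but fully self-contained, avoids the duality of $\ell_p$-singularity, and in fact mirrors the mechanism inside Lemma~\ref{l:ell2singular}. Either approach is fine; yours trades brevity for transparency.
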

\begin{proof}
By duality it is enough to proof the first statement. By Lemma \ref{l:ellpsingular}, $T(B_{L_p})$ is a uniformly $p$-integrable set. Thus, for every $\varepsilon>0$ there is $M_\varepsilon>0$ such that $T(B_{L_p})\subseteq M_\varepsilon B_{L_\infty}+\frac{\varepsilon}{2} B_{L_p}$. Let $\delta=(\varepsilon/2M_\varepsilon)^p$. It follows that for any set with $\mu(A)<\delta$
$$
\|P_A T\|=\sup_{x\in B_{L_p}}\|P_A Tx\|_p\leq \sup_{x\in B_{L_p}} \|P_A M_\varepsilon\|_p+\frac{\varepsilon}{2}\leq \varepsilon.
$$
\end{proof}

\end{document}